%
%
\documentclass{tglat2e-arxiv}
\usepackage[colorlinks=true, pdfstartview=FitV, linkcolor=blue, citecolor=blue, urlcolor=blue, breaklinks=true]{hyperref}
\usepackage{amsmath,amsfonts,amssymb,amscd,comment,euscript}
\usepackage[usenames]{color}
\usepackage[all]{xy}
\usepackage{tikz}

%
%


%
%

\newcommand\C{\mathbb{C}}
\newcommand\Z{\mathbb{Z}}
\newcommand\Q{\mathbb{Q}}

\newcommand\N{\mathbb{N}}
\newcommand\kk{{\Bbbk}}

\newcommand\ev{\mathrm{ev}}

\newcommand\Vir{\mathrm{Vir}}

\newcommand\g{\mathfrak{g}}

\newcommand\fm{\mathfrak{m}}

\newcommand\fa{\mathfrak{a}}

\newcommand\cV{\mathcal{V}}
\newcommand\cB{\mathcal{B}}

%
%


\DeclareMathOperator{\End}{End}

\DeclareMathOperator{\Spec}{Spec}
\DeclareMathOperator{\maxSpec}{maxSpec}
\DeclareMathOperator{\Der}{Der}

\DeclareMathOperator{\Ann}{Ann}
\DeclareMathOperator{\Supp}{Supp} 

\DeclareMathOperator{\height}{ht}

\DeclareMathOperator{\rad}{rad}

\DeclareMathOperator{\hm}{hm}

%
%

\theoremstyle{plain}
\newtheorem{theorem}{Theorem}[section]
\newtheorem*{theorem*}{Theorem}
\newtheorem{proposition}[theorem]{Proposition}
\newtheorem{lemma}[theorem]{Lemma}
\newtheorem{corollary}[theorem]{Corollary}

\theoremstyle{definition}
\newtheorem{definition}[theorem]{Definition}
\newtheorem*{remark*}{Remark}
\newtheorem{remark}[theorem]{Remark}

\numberwithin{equation}{section}

\allowdisplaybreaks

%
%
%
%
%

%
%

%
\begin{document}
%

\title[Quasifinite modules over map Virasoro algebras]{Classification of irreducible\\ quasifinite modules over\\ map Virasoro algebras}

\authors{Alistair Savage\thanks{This work was supported by a Discovery Grant from the Natural Sciences and Engineering Research Council of Canada.}
\address{Department of Mathematics \\ and Statistics\\ University of Ottawa, Canada}
\email{alistair.savage@uottawa.ca}
}



\received{August 2, 2011}
\accepted{January 14, 2012}

\maketitle 

\begin{abstract}
We give a complete classification of the irreducible quasifinite modules for algebras of the form $\Vir \otimes A$, where $\Vir$ is the Virasoro algebra and $A$ is a finitely generated commutative associative unital algebra over the complex numbers.  It is shown that all such modules are tensor products of generalized evaluation modules.  We also give an explicit sufficient condition for a Verma module of $\Vir \otimes A$ to be reducible.  In the case that $A$ is an infinite-dimensional integral domain, this condition is also necessary.
\end{abstract}

\tableofcontents

%
\section*{Introduction}
%

The \emph{Witt algebra} $\Der \C[t,t^{-1}]$ has basis $d_n := t^{n+1} \frac{d}{dt}$, $n \in \Z$, and Lie bracket given by $[d_m,d_n] = (n-m)d_{n+m}$.  It is the Lie algebra of polynomial vector fields on $S^1$ (or $\C^*$).  The \emph{Virasoro algebra} $\Vir := \Der \C[t,t^{-1}] \oplus \C c$ is the universal central extension of the Witt algebra.  It has Lie bracket
\[
  [d_n,c]=0,\quad [d_m,d_n] = (n-m)d_{m+n} + \delta_{m,-n} \frac{m^3-m}{12}c,\quad m,n \in \Z.
\]
The Virasoro algebra plays a fundamental role in the theory of vertex operator algebras, conformal field theory, string theory, and the representation theory of affine Lie algebras.

An important class of modules for the Virasoro algebra are the so-called \emph{quasifinite modules} (or \emph{Harish-Chandra modules}), which are modules on which the maximal abelian diagonalizable subalgebra $\C d_0 \oplus \C c$ acts reductively with finite-dimen\-sional weight spaces.  The unitary irreducible quasifinite $\Vir$-modules were classified by Chari and Pressley \cite{CP88}.  All irreducible quasifinite $\Vir$-modules (without the assumption of unitarity) were then classified by Mathieu \cite{Mat92}, where it was shown that they are all highest weight modules, lowest weight modules or \emph{modules of the intermediate series} (otherwise known as \emph{tensor density modules} and whose nonzero weight spaces are all one-dimensional).

Many generalizations of the Virasoro algebra and other closely related algebras have been considered by several authors.  These include, but are not limited to, the higher rank Virasoro algebras \cite{LZ06,Maz99,Su01,Su03}, the $\Q$-Virasoro algebra \cite{Maz00}, the generalized Virasoro algebras \cite{BZ04,GLZ06,HWZ03}, the twisted Heisenberg-Virasoro algebra \cite{LZ10}, and the loop-Virasoro algebra \cite{GLZ08}.  In many cases, classifications of the irreducible quasifinite modules have been given.

The goal of the current paper is to classify the irreducible quasifinite modules for \emph{map Virasoro algebras}, which are Lie algebras of the form $\Vir \otimes A$, where $A$ is a finitely generated commutative associative unital algebra.  The related problem of classifying the irreducible finite-dimensional modules for $\g \otimes A$, where $\g$ is a finite-dimensional Lie algebra, as well as for the fixed point algebras of $\g \otimes A$ under certain finite group actions (the \emph{equivariant map algebras}), was solved in \cite{CFK10,NSS09}.  In particular, all irreducible finite-dimensional modules are tensor products of one-dimensional modules and \emph{evaluation modules}.  The main result (Theorem~\ref{thm:main-theorem}) of the current paper is the following (we refer the reader to Section~\ref{sec:map-algebras} for the definitions of evaluation and generalized evaluation modules).

\begin{theorem*}
  Any irreducible quasifinite $(\Vir \otimes A)$-module is one of the following:
  \begin{enumerate}
    \item \label{thm-item:int-series} a single point evaluation module corresponding to a $\Vir$-module of the intermediate series,
    \item a finite tensor product of single point generalized evaluation modules corresponding to irreducible highest weight modules, or
    \item a finite tensor product of single point generalized evaluation modules corresponding to irreducible lowest weight modules.
  \end{enumerate}
  In particular, they are all tensor products of single point generalized evaluation modules.
\end{theorem*}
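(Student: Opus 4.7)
The plan is to reduce to the case where $A$ is finite-dimensional, then decompose $V$ over the support of this reduced algebra and invoke Mathieu's classification on each local factor. Let $V$ be an irreducible quasifinite $(\Vir\otimes A)$-module. Restricting the action to $\Vir=\Vir\otimes 1$ yields a (generally reducible) quasifinite $\Vir$-module, so every $\Vir$-subquotient of $V$ falls into one of the three families in Mathieu's trichotomy. Since $V$ is irreducible over the larger algebra $\Vir\otimes A$ and the $d_n\otimes a$ shift $d_0\otimes 1$-weights in exactly the same way as the $d_n$ do, all such subquotients should be of a uniform type, placing $V$ into precisely one of the three families; ruling out ``mixed'' modules requires analyzing how $A$ can intertwine distinct Mathieu types, which should be obstructed by the incompatible weight-support behavior of those types.

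The crucial technical step, which I expect to be the main obstacle, is to prove that the annihilator ideal $I=\{a\in A\mid (\Vir\otimes a)V=0\}$ has finite codimension; after replacing $A$ by $A/I$ we may assume $V$ is faithful and aim to show $\dim A<\infty$. In the highest (resp.\ lowest) weight case, pick a generating highest (resp.\ lowest) weight vector $v$. Then $(d_0\otimes A)v$ sits inside the finite-dimensional extremal weight space, so the cyclic $A$-submodule generated by $v$ through the Cartan-like part is finite-dimensional. The bracket
\[
  [d_m\otimes a,\,d_{-m}\otimes b]=-2m\,(d_0\otimes ab)+\tfrac{m^3-m}{12}(c\otimes ab)
\]
then lets one control $d_0\otimes ab$ from $d_0\otimes a$ and $d_0\otimes b$, and combined with a PBW argument propagates finite-dimensionality from a single weight space to show that the faithful $A$-action on all of $V$ factors through a finite-dimensional quotient. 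In the intermediate series case, every nonzero weight space is one-dimensional, so each $d_0\otimes a$ acts on it by a scalar, yielding an algebra homomorphism $A\to\C$ whose kernel must equal $I$; faithfulness forces $A\cong\C$, producing the single-point evaluation of case~(a).

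Once $\dim A<\infty$, the Chinese Remainder Theorem gives $A\cong\prod_{i=1}^n A_i$ with each $A_i$ local finite-dimensional, whence $\Vir\otimes A\cong\bigoplus_{i=1}^n \Vir\otimes A_i$. A standard tensor product principle for irreducible modules over a direct sum of Lie algebras---adapted to the quasifinite setting via Schur's lemma applied weight-space by weight-space---yields $V\cong V_1\otimes\cdots\otimes V_n$, with each $V_i$ an irreducible quasifinite $(\Vir\otimes A_i)$-module. By construction each $V_i$ is a single-point generalized evaluation module at the maximal ideal corresponding to $A_i$, and combining this tensor-product decomposition with the type uniformity from the first paragraph recovers precisely the three cases of the theorem, with the intermediate series case restricted to $n=1$ and $A_1\cong\C$ by the analysis in the second paragraph.
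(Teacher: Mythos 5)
Your outline follows the same broad shape as the paper (trichotomy, reduction to finite support, tensor decomposition), but the two steps you wave at are precisely where the real work lies, and as written they do not go through. First, the trichotomy: you cannot deduce that $V$ is highest weight, lowest weight, or uniformly bounded merely by noting that $\Vir$-subquotients of the restriction obey Mathieu's classification and that ``mixed types should be obstructed.'' The paper devotes Proposition~\ref{prop:wt-spaces} to exactly this statement, and its proof is not formal: it uses the decomposition theorem of Martin--Patera for the restricted module, Mathieu's criterion that a vector killed by $\cV_{\ge n}$ forces a highest weight module, and a careful commutator computation producing such a vector from a kernel argument in a single finite-dimensional weight space. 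Nothing in your first paragraph supplies this, and ``incompatible weight-support behavior'' is not an argument.

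Second, your treatment of the non-extremal case assumes what must be proved. In the uniformly bounded case you take for granted that the weight spaces are one-dimensional (i.e., that the restriction is already of intermediate series type), that the assignment $a \mapsto$ (eigenvalue of $d_0 \otimes a$ on a weight space) is an algebra homomorphism (composition of $d_0\otimes a$ and $d_0\otimes b$ is not $d_0\otimes ab$, so this needs proof), and that its kernel is the full annihilator $I$, so that $A/I \cong \C$. The paper's Section~\ref{sec:UBM} shows how much is hidden here: one must first produce a finite-codimension ideal killing $V$ (Proposition~\ref{prop:irred-killed-by-finite-codim-ideal}, via the ideals $I_j$ and a stabilization argument), then prove single-point support by showing a two-point module would have infinite-dimensional weight spaces (Proposition~\ref{prop:UBM-have-single-point-support}), and then prove that nilpotent ideals act by zero (Proposition~\ref{prop:J2=0-kills-V} and Corollary~\ref{cor:nilp-ideals-act-by-zero}), which is what upgrades ``generalized evaluation'' to honest evaluation at a maximal ideal; your ``faithfulness forces $A\cong\C$'' silently asserts all of this. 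In the highest weight case your finite-codimension argument is also off: the kernel of $f \mapsto (d_0\otimes f)v$ is a finite-codimension \emph{subspace} of $A$ but not an ideal, so it does not yield finite support; the paper instead uses the kernel of $f\mapsto (d_{-2}\otimes f)v_\varphi$ into the finite-dimensional weight space two steps down, shows that kernel is an ideal, and only then propagates annihilation to all of $V(\varphi)$ (Proposition~\ref{prop:hw-HC-finite-support}). Your final CRT-plus-tensor-product step is essentially the paper's Theorem~\ref{thm:hw-HC-prod-single-points} and is fine in spirit (absolute irreducibility of countable-dimensional irreducibles justifies the factorization), but it only becomes available after the missing steps above are supplied.
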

We note that the problem of determining which highest and lowest weight irreducible modules are quasifinite is nontrivial when $A$ is infinite-dimensional.  (When $A$ is finite-dimensional, for instance when $A = \C$ and $\Vir \otimes A$ is just the usual Virasoro algebra, all highest and lowest weight irreducible modules are quasifinite.)

We also give an explicit sufficient condition for the Verma modules of $\Vir \otimes A$ to be reducible.  Under the additional assumption that $A$ is an infinite-dimensional integral domain, the condition is also necessary (Theorem~\ref{thm:Verma-reducible}).

Owing to the fact that the Virasoro algebra is infinite-dimensional, the techniques used in the current paper are very different than those used in \cite{NSS09}.  We also see some differences in the classifications.  In particular, we see that the modules of type~\eqref{thm-item:int-series} in the above theorem can only have support at a single point.  This is due to the fact that a tensor product of such modules no longer has finite-dimensional weight spaces.

The Lie algebra $\Vir \otimes A$ can be thought of as a central extension of the Lie algebra of the group of diffeomorphisms of $(\Spec A) \times \C^*$ fixing the first factor.  For this reason, we hope the results of the current paper will be useful in addressing the important open problem of classifying the quasifinite modules for the Lie algebra of polynomial vector fields on more arbitrary varieties (see, for example, \cite{Rao04} for a conjecture related to the case of the higher dimensional torus).  When $A = \C[t,t^{-1}]$, the Lie algebra $\Vir \otimes A = \Vir \otimes \C[t,t^{-1}]$ is called the \emph{loop-Virasoro algebra}.  In this case, the results of the current paper recover those of \cite{GLZ08}.  In fact, many of our arguments are inspired by ones found there.

There remain many interesting open questions related to the representation theory of the Virasoro algebra and its generalizations.  For the map Virasoro algebras, it would be useful to describe the extensions between irreducible quasifinite modules.  This was done for the usual Virasoro algebra in \cite{MP91a,MP91b,MP92} and for the equivariant map algebras in \cite{NS11}.  It would also be interesting to see if a classification of the irreducible quasifinite modules for twisted (or equivariant) versions of map Virasoro algebras is possible.  Finally, one might hope for a classification similar to the one in the current paper (in terms of generalized evaluation modules) when $\Vir$ is replaced by other important infinite-dimensional Lie algebras such as the Heisenberg algebra or the Lie algebra of all differential operators on the circle (instead of just those of order one).

The paper is organized as follows.  In Section~\ref{sec:map-algebras} we review some important definitions and results for map algebras (Lie algebras of the form $\g \otimes A$).  We introduce the Virasoro algebra and its generalization considered in the current paper in Section~\ref{sec:Virasoro}.  In Section~\ref{sec:wt-spaces} we show that any quasifinite module is either a highest weight module, a lowest weight module, or a module whose weight space dimensions are uniformly bounded.  We then classify the uniformly bounded modules in Section~\ref{sec:UBM} and the highest/lowest weight modules in Section~\ref{sec:hw-modules}.  Finally, in Section~\ref{sec:Verma-reducibility} we describe a necessary and sufficient condition for the Verma modules to be reducible.

\subsection*{Notation}

Throughout, $A$ will denote a finitely generated (hence Noetherian) commutative associative unital algebra over the field $\C$ of complex numbers; and all tensor products, Lie algebras, vector spaces, etc., are over $\C$.  When we refer to the \emph{dimension} of $A$, we are speaking of its dimension as a complex vector space (as opposed to referring to a geometric dimension).  Similarly, when we say that an ideal $J \trianglelefteq A$ has \emph{finite codimension} in $A$, we mean that the dimension of $A/J$ as a complex vector space is finite.  We let $\N$ be the set of nonnegative integers and $\N_+$ be the set of positive integers.  For a Lie algebra $L$, $U(L)$ will denote its universal enveloping algebra.  This has a natural filtration $U_0(L) \subseteq U_1(L) \subseteq U_2(L) \subseteq \dots$ coming from the grading on the tensor algebra of $L$.

\subsection*{Acknowledgements}

The author would like to thank the Institut de Math\'ematiques de Jussieu and the D\'epartement de Math\'ematiques d'Orsay for their hospitality during his stays there, when the writing of the current paper took place.  He would also like to thank Y.~Billig, E.~Neher, O.~Schiffmann, G.~Smith, and K.~Zhao for useful discussions and D.~Daigle for providing proofs of various commutative algebra results used in Section~\ref{sec:Verma-reducibility}.

%
\section{Map algebras} \label{sec:map-algebras}
%

In this section we review some important definitions and results related to map algebras.

\begin{definition}[Map algebra]
  If $\g$ is a Lie algebra, then $\g \otimes A$ is the \emph{map algebra} associated to $\g$ and $A$.  It is a Lie algebra with bracket defined by
  \[
    [u_1 \otimes f_1, u_2 \otimes f_2] = [u_1,u_2] \otimes f_1f_2
  \]
  (extended by linearity).  We will identify $\g$ with the Lie subalgebra $\g \otimes \C \subseteq \g \otimes A$.
\end{definition}

Recall that a Lie algebra $\g$ is said to be \emph{perfect} if $[\g,\g]=\g$.

\begin{lemma}
  Suppose $\g$ is a perfect Lie algebra and $V$ is a $(\g \otimes A)$-module.  Then
  \[
    \{f \in A \ |\ (\g \otimes f) V = 0\}
  \]
  is an ideal of $A$.
\end{lemma}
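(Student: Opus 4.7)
The plan is to verify the two defining properties of an ideal in turn. First, $J$ is clearly a linear subspace of $A$: for $f, g \in J$, $\alpha, \beta \in \C$, $u \in \g$, and $v \in V$, linearity of the action gives
\[
  (u \otimes (\alpha f + \beta g))v = \alpha(u \otimes f)v + \beta(u \otimes g)v = 0.
\]
So the content of the lemma is to show that $f \in J$ and $h \in A$ imply $fh \in J$.

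The main idea is to exploit the assumption that $\g$ is perfect. Since $\g = [\g,\g]$, any $u \in \g$ can be written as a finite sum $u = \sum_i [x_i, y_i]$ with $x_i, y_i \in \g$. In the map algebra we then have the identity
\[
  u \otimes fh = \sum_i [x_i, y_i] \otimes fh = \sum_i [x_i \otimes f,\, y_i \otimes h],
\]
which follows directly from the definition of the bracket on $\g \otimes A$. Applying this to an arbitrary $v \in V$ and expanding the commutator action, one gets
\[
  (u \otimes fh) v = \sum_i \Bigl( (x_i \otimes f)(y_i \otimes h) v - (y_i \otimes h)(x_i \otimes f) v \Bigr).
\]

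Each term on the right vanishes because $f \in J$ forces $x_i \otimes f$ to act as zero on all of $V$: the first summand vanishes because $x_i \otimes f$ kills the vector $(y_i \otimes h) v \in V$, and the second vanishes because $x_i \otimes f$ already kills $v$. Hence $(u \otimes fh) v = 0$ for all $u \in \g$ and $v \in V$, i.e., $fh \in J$. There is no real obstacle here; the only subtlety worth flagging is that perfectness of $\g$ is used in an essential way to turn multiplication by $h$ (which is not itself in $J$) into a commutator involving the annihilating factor $f$.
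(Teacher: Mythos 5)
Your proof is correct and follows essentially the same route as the paper: both use perfectness to write $u = \sum_i [x_i,y_i]$, so that $u \otimes fh = \sum_i [x_i \otimes f, y_i \otimes h]$ annihilates $V$ because $\g \otimes f$ does. Your version merely spells out the expansion of the commutator action, which the paper leaves implicit.
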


\begin{proof}
  Let $J = \{f \in A \ |\ (\g \otimes f) V = 0\}$.  Clearly $J$ is a linear subspace of $A$.  Suppose $f \in J$ and $g \in A$.  Since $\g$ is perfect, for all $u \in \g$, we have $u = \sum_{i=1}^n [u_i,u'_i]$ for some $u_i,u'_i \in \g$, $i=1,\dots,n$.  Then
  \[
    (u \otimes fg) V = \left(\sum_{i=1}^n [u_i \otimes f, u'_i \otimes g]\right) V = 0.
  \]
  Hence $J$ is an ideal of $A$.\qed
\end{proof}

For the rest of the paper, we assume that $\g$ is perfect.  (Later we shall take $\g$ to be the Virasoro algebra, which is perfect.)

\begin{definition}[Support]
  For a $(\g \otimes A)$-module $V$, we define
  \begin{align*}
    \Ann_A V &:= \{f \in A \ |\ (\g \otimes f) V = 0\} \trianglelefteq A, \\
    \Supp_A V &:= \{\fm \in \maxSpec A\ |\ \Ann_A V \subseteq \fm\}.
  \end{align*}
  The set $\Supp_A V$ is called the \emph{support} of $V$.  We say $V$ has \emph{finite support} if $\Supp_A V$ is finite.
\end{definition}

\begin{definition}[Evaluation module]
  Suppose $\fm \trianglelefteq A$ is a maximal ideal and $V$ is a $\g$-module with corresponding representation $\rho : \g \to \End V$.  Then the composition
  \[
    \g \otimes A \twoheadrightarrow (\g \otimes A)/(\g \otimes \fm) \cong \g \otimes (A/\fm) \cong \g \xrightarrow{\rho} \End V,
  \]
  is called a \emph{(single point) evaluation representation} of $\g \otimes A$.  The corresponding module is called a \emph{(single point) evaluation module} and is denoted $\ev_\fm V$.
\end{definition}

\begin{definition}[Generalized evaluation module]
  Suppose $\fm \trianglelefteq A$ is a maximal ideal, $n \in \N_+$, and $V$ is a $(\g \otimes (A/\fm^n))$-module with corresponding representation $\rho : \g \otimes (A/\fm^n) \to \End V$.  Then the composition
  \[
    \g \otimes A \twoheadrightarrow (\g \otimes A)/(\g \otimes \fm^n) \cong \g \otimes (A/\fm^n) \xrightarrow{\rho} \End V
  \]
  is called a \emph{(single point) generalized evaluation representation} of $\g \otimes A$.  The corresponding module is called a \emph{(single point) generalized evaluation module} and is denoted $\ev_{\fm^n} V$.
\end{definition}

%
\section{Map Virasoro algebras} \label{sec:Virasoro}
%

In this section we define the Virasoro algebra and its generalizations, the map Virasoro algebras.  We also review the classification of irreducible quasifinite modules for the Virasoro algebra.

\begin{definition}[Virasoro algebra $\Vir$ and map Virasoro algebra $\cV$]
  The \linebreak \emph{Virasoro algebra} $\Vir$ is the Lie algebra with basis $\{c, d_n\ |\ n \in \Z\}$ and Lie bracket given by
  \[
    [d_n,c]=0,\quad [d_m,d_n] = (n-m)d_{m+n} + \delta_{m,-n} \frac{m^3-m}{12}c,\quad m,n \in \Z.
  \]
  We define $\cV = \Vir \otimes A$ and call this a \emph{map Virasoro algebra}.
\end{definition}

We have a decomposition
\[
  \cV  = \bigoplus_{i \in \Z} \cV_i,\quad \cV_0 = (d_0 \otimes A) \oplus (c \otimes A),\quad \cV_i = d_i \otimes A,\ i \ne 0,
\]
which is simply the weight decomposition of $\Vir$, that is, the eigenspace decomposition corresponding to the action of $d_0$.  Set
\[
  \cV_+ = \bigoplus_{i > 0} \cV_i,\quad \cV_- = \bigoplus_{i < 0} \cV_i, \quad \cV_{\ge n} = \bigoplus_{i \ge n} \cV_i,\ n \in \Z.
\]

For a $\cV$-module $V$ and $\lambda \in \C$, we let $V_\lambda$ be the eigenspace (or \emph{weight space}) corresponding to the action of $d_0$ with eigenvalue $\lambda$.  We say $V$ is a \emph{weight module} if $V = \bigoplus_{\lambda \in \C} V_\lambda$.  We shall use the following lemma repeatedly without mention.

\begin{lemma} \label{lem:weight-string}
  Any irreducible weight $\cV$-module $V$ has a weight decomposition of the form $V = \bigoplus_{i \in \Z} V_{\alpha+i}$ for some $\alpha \in \C$.
\end{lemma}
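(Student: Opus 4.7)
The plan is to exploit the $\Z$-grading $\cV = \bigoplus_{i \in \Z} \cV_i$ recalled in the paragraph before the lemma, and to interpret it as the eigenspace decomposition of $\cV$ under $\ad(d_0 \otimes 1)$. First I would check this interpretation: from the bracket relations $[d_0, d_i \otimes a] = i(d_i \otimes a)$ for every $i \in \Z$ and $a \in A$, and $[d_0, c \otimes a] = 0$, it is immediate that $\cV_i$ is precisely the $i$-eigenspace of $\ad(d_0 \otimes 1)$ on $\cV$.

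Next I would establish the standard compatibility $\cV_i \cdot V_\lambda \subseteq V_{\lambda + i}$. For $v \in V_\lambda$ and $x \in \cV_i$, a one-line computation
\[
d_0 \cdot (x v) \;=\; [d_0, x] v \,+\, x \cdot (d_0 v) \;=\; (i + \lambda) x v
\]
does the job. Hence the $\cV$-action shifts $d_0$-weights by integers.

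With this in hand, since $V$ is a nonzero weight module, there exists some $\alpha \in \C$ with $V_\alpha \neq 0$. Consider the subspace
\[
V' \;:=\; \bigoplus_{i \in \Z} V_{\alpha + i}.
\]
By the previous paragraph $V'$ is stable under each $\cV_i$, hence is a nonzero $\cV$-submodule of $V$. Irreducibility of $V$ then forces $V' = V$, which is exactly the required decomposition.

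I do not foresee any real obstacle: the result is a direct consequence of the $\Z$-grading on $\cV$ by $\ad(d_0 \otimes 1)$, combined with irreducibility. The only conceptual point to be careful about is the (standard) convention that the $V_{\alpha + i}$ are allowed to be zero for some $i$; the decomposition simply records that all nonzero weights differ from $\alpha$ by an integer.
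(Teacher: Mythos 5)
Your proof is correct and is essentially the same argument the paper has in mind: its one-line proof ("any nonzero weight vector generates $V$") is exactly your observation that the $\Z$-grading of $\cV$ under $\ad d_0$ forces the submodule generated by a vector in $V_\alpha$ to lie in $\bigoplus_{i\in\Z}V_{\alpha+i}$, which by irreducibility is all of $V$. You have simply spelled out the details the paper leaves implicit.
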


\begin{proof}
  This follows immediately from the fact that any nonzero weight vector generates $V$.\qed
\end{proof}

\begin{definition}[Quasifinite module]
  A $\cV$-module is called a \emph{quasifinite module} (or a \emph{Harish-Chandra module}, or an \emph{admissible module}) if it is a weight module and all weight spaces are finite-dimensional.
\end{definition}

\begin{definition}[Highest and lowest weight modules]
  A $\cV$-module $V$ is called a \emph{highest weight module} (respectively, \emph{lowest weight module}) if there exists a nonzero weight vector (that is, eigenvector of $d_0$) $v \in V$ with $\cV_+ v = 0$ (respectively, $\cV_- v =0$) and $U(\cV)v = V$.  Such a vector $v$ is called a \emph{highest weight vector} (respectively, \emph{lowest weight vector}).
\end{definition}

\begin{remark}
  An irreducible $\cV$-module $V$ is a highest (resp.\ lowest) weight module if and only if it is a weight module and there exists a nonzero vector $v \in V$ with $\cV_+ v=0$ (resp.\ $\cV_- v=0$).  Indeed, writing $v$ as a sum of (nonzero) weight vectors, we see that its term of highest weight must also be annihilated by $\cV_+$ (resp.\ $\cV_-$) and generates $V$ (since $V$ is irreducible).
\end{remark}

\begin{remark}
  Via the involution of $\Vir$ (hence of $\cV$) given by $d_n \mapsto -d_{-n}$, $n \in \Z$, $c \mapsto -c$, one can translate between highest weight and lowest weight modules.  Thus, we will often prove results only for highest weight modules, with the corresponding results for lowest weight modules following from this translation.
\end{remark}

By the PBW Theorem, we have a triangular decomposition
\[
  U(\cV) \cong U(\cV_-) \otimes U(\cV_0) \otimes U(\cV_+).
\]
Note that since $\cV_0$ is abelian, any one-dimensional representation of $\cV_0$ (equivalently, of $U(\cV_0)$) is simply a linear map from $\cV_0$ to the ground field $\C$.  For such a linear map $\varphi$, let $\C_\varphi$ denote the corresponding module.

\begin{definition}[Verma module]
  Let $\varphi \in \hom_\C(\cV_0,\C)$ be a one-dimensional representation of $\cV_0$.  Extend $\C_\varphi$ to a module for $\cV_0 \oplus \cV_+$ by defining $\cV_+$ to act by zero.  Then
  \[
    M(\varphi) := U(\cV) \otimes_{U(\cV_0 \oplus \cV_+)} \C_\varphi
  \]
  is the \emph{Verma module} corresponding to $\varphi$.  It is a highest weight module of highest weight $\varphi(d_0)$ and $M(\varphi) = \bigoplus_{i \in \N} M(\varphi)_{\varphi(d_0)-i}$.  We define $\tilde v_\varphi := 1 \otimes 1_\varphi$, where $1_\varphi$ denotes the unit in $\C_\varphi$.  Thus $\tilde v_\varphi$ is a highest weight vector of $M(\varphi)$.  Note that $M(\varphi) \cong U(\cV_-)$ as $U(\cV_-)$-modules.
\end{definition}

\begin{definition}[Irreducible highest weight module]
  For $\varphi \in \hom_\C(\cV_0,\C)$, let $N(\varphi)$ be the unique maximal proper submodule of $M(\varphi)$.  Then
  \[
    V(\varphi) := M(\varphi)/N(\varphi)
  \]
  is the \emph{irreducible highest weight module} corresponding to $\varphi$.  It is a highest weight module of highest weight $\varphi(d_0)$ and $V(\varphi) = \bigoplus_{i \in \N} V(\varphi)_{\varphi(d_0)-i}$.  We denote the image of $\tilde v_\varphi$ in $V(\varphi)$ by $v_\varphi$.  In the case that $A \cong \C$, so $\cV \cong \Vir$, $\varphi$ is uniquely determined by $\varphi(c)$ and $\varphi(d_0)$.  We will therefore sometimes write $V(\varphi(c),\varphi(d_0))$ for $V(\varphi)$.
\end{definition}

\begin{definition}[Uniformly bounded module]
  A weight $\cV$-module $V$ is called \emph{uniformly bounded} if there exists $N \in \N$ such that $\dim V_\lambda < N$ for all $\lambda \in \C$.
\end{definition}

Note that the above definitions apply to the usual Virasoro algebra since $\Vir \cong \cV$ when $A = \C$.  In this case, they reduce to the definitions appearing in the literature.  We now summarize some known results on quasifinite modules for $\Vir$.

Note that $\Der \C[t,t^{-1}]$ acts naturally on $\C[t,t^{-1}]$ and therefore so does $\Vir$, with $c$ acting as zero.  Twistings of this action yield the following important $\Vir$-modules.

\begin{definition}[Module of the intermediate series] \label{def:intermediate-series}
  Fix $a,b \in \C$.  Define $V(a,b)$ to be the $\Vir$-module with underlying vector space $\C[t,t^{-1}]$, with $c$ acting by zero, and
  \[
    u \cdot v = (u + a\, \mathrm{div} (u) + bt^{-1} ut)v,\quad \forall\ u \in \Der \C[t,t^{-1}],\ v \in V(a,b),
  \]
  where $\mathrm{div} \left( p(t) \frac{d}{dt} \right) = \frac{d}{dt} p(t)$ for a polynomial $p(t) \in \C[t]$.  If $b \not \in \Z$ or $a \ne 0,1$, then $V(a,b)$ is irreducible (see, for example, \cite[Proposition~1.1]{KR87}).  Otherwise, $V(a,b)$ has two irreducible subquotients: the trivial submodule $\C$ and $V(a,b)/\C$.  The nontrivial irreducible subquotients of the modules $V(a,b)$ are called \emph{modules of the intermediate series} (or \emph{tensor density modules}).
\end{definition}

We record the following result since it will be used several times in the current paper.

\begin{lemma} \label{lem:inter-series-dim-1-wt-spaces}
  If $V$ is a module of the intermediate series for $\Vir$, then $V$ is a weight module.  Furthermore, if we write $V = \bigoplus_{i \in \Z} V_{\alpha + i}$ for some $\alpha \in \C$ as in Lemma~\ref{lem:weight-string}, then $\dim V_{\alpha+i} = 1$ for all $i \in \Z$ with $\alpha + i \ne 0$.
\end{lemma}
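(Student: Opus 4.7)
The plan is to read off everything from the explicit realization of $V(a,b)$ on $\C[t,t^{-1}]$. First I would apply Definition~\ref{def:intermediate-series} directly to the basis $\{t^k : k \in \Z\}$. A short calculation with $d_n = t^{n+1}\tfrac{d}{dt}$ yields a formula of the shape
\[
  d_n \cdot t^k = c_{n,k}\, t^{n+k}
\]
for explicit scalars $c_{n,k}$ depending on $a$ and $b$, and in particular gives $d_0 \cdot t^k = (\lambda_0 + k)\, t^k$ for some constant $\lambda_0 = \lambda_0(a,b) \in \C$. Since these $d_0$-eigenvalues are pairwise distinct, $V(a,b)$ is a weight module with the one-dimensional weight-space decomposition $V(a,b) = \bigoplus_{k \in \Z} \C t^k$.

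Next, a module $V$ of the intermediate series is by definition a nontrivial irreducible subquotient of some $V(a,b)$, hence inherits a weight decomposition in which each weight space has dimension at most one. Applying Lemma~\ref{lem:weight-string} and choosing $\alpha \in \lambda_0 + \Z$ appropriately then gives $V = \bigoplus_{i \in \Z} V_{\alpha + i}$ with $\dim V_{\alpha + i} \in \{0, 1\}$.

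It remains to identify which weight space, if any, is killed by passing from $V(a,b)$ to $V$. If $V(a,b)$ is irreducible then $V = V(a,b)$ and nothing is missing, so the claim is immediate. Otherwise $a \in \{0,1\}$ and $b \in \Z$, and Definition~\ref{def:intermediate-series} tells us that the only other irreducible subquotient of $V(a,b)$ is the trivial one-dimensional module $\C$. Using the explicit formula for $c_{n,k}$, I would verify in each of the two cases $a=0$ and $a=1$ that this trivial subquotient is spanned by the unique basis vector $t^{k_0}$ of $d_0$-eigenvalue zero: for $a=0$ the vector $t^{-b}$ is annihilated by every $d_n$, and for $a=1$ the line $\C t^{-b-1}$ is complementary to the image of every $d_n$, and in both cases $\lambda_0 + k_0 = 0$. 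Therefore the unique weight removed in passing to $V$ is $0$, giving $\dim V_{\alpha + i} = 1$ for every $\alpha + i \ne 0$, as claimed.

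The only step requiring any real care is this last case-split: identifying the basis vector spanning the trivial subquotient and checking that its weight is exactly $0$. Once that bookkeeping is done, the rest follows formally from the $d_0$-eigenvalue computation on $V(a,b)$ and the fact that weight spaces of a subquotient are contained in the corresponding weight spaces of the ambient module.
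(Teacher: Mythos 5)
Your proposal is correct and follows the same route as the paper, whose proof simply reads off the statement from the explicit realization in Definition~\ref{def:intermediate-series}: you spell out the $d_0$-eigenvalue computation on the basis $\{t^k\}$ of $V(a,b)$ and note that any trivial subquotient removed must sit in the (one-dimensional) weight-zero space, which is exactly the content the paper declares immediate. No gaps; the case-split for $a=0,1$ is just the detailed bookkeeping behind that observation.
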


\begin{proof}
  This follows immediately from Definition~\ref{def:intermediate-series}.\qed
\end{proof}

The following result gives a classification of the irreducible quasifinite modules for $\Vir$ (see also \cite[Theorem~0.5]{CP88} for an earlier classification under the additional assumption of unitarity).

\begin{proposition}[{\cite[Theorem~1]{Mat92}}] \label{prop:Vir-module-classification}
  Any irreducible quasifinite module over $\Vir$ is a highest weight module, a lowest weight module, or a module of the intermediate series.
\end{proposition}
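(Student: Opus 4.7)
The plan is to follow Mathieu's strategy by reducing to a dichotomy between (i) modules admitting a highest or lowest weight vector, which are immediately highest/lowest weight modules by irreducibility, and (ii) modules with no such vector, for which I would show that the weight space dimensions are uniformly bounded, and then classify the uniformly bounded irreducibles as modules of the intermediate series.

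First I would fix an irreducible quasifinite $\Vir$-module $V$ and write $V = \bigoplus_{i \in \Z} V_{\alpha+i}$ using Lemma~\ref{lem:weight-string}. If some weight space contains a vector annihilated by $\cV_+$ (respectively $\cV_-$), irreducibility forces $V$ to be a highest (respectively lowest) weight module, so one may assume every weight space $V_{\alpha+i}$ is nonzero and no vector is annihilated by all of $\cV_+$ or by all of $\cV_-$. Under this assumption I would argue, by an elementary dimension count, that for each fixed $n > 0$ the endomorphism $d_n$ acts injectively on all but finitely many weight spaces $V_{\alpha+i}$, and similarly for $d_{-n}$.

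The core of the argument, and the main obstacle, is to upgrade these injectivity statements into a uniform bound $\dim V_{\alpha+i} \le N$ independent of $i$. Following Mathieu, I would use the Virasoro relation $[d_m,d_{-m}] = -2m\, d_0 + \frac{m^3-m}{12} c$ together with a careful study of the composition $d_{-m}^k d_m^k$ acting on a fixed weight space: this composition is polynomial in the weight, with leading term forced by the bracket relations, and its rank, combined with the injectivity of $d_{\pm m}$, constrains the growth of $\dim V_{\alpha+i}$. The delicate point is to rule out unbounded growth using locally finite endomorphism arguments in $U(\Vir)$; this is the substance of Mathieu's original proof and what I would have to handle with care.

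Once uniform boundedness is established, I would classify uniformly bounded irreducible $\Vir$-modules. Using the $\mathfrak{sl}_2$-triple $\{d_{-1}, d_0, d_1\}$, the uniform bound forces the weight spaces of $V$ to be at most one-dimensional in the generic range, and then matching the action of the full Virasoro algebra against the constraint that $d_{m+n} = \frac{1}{n-m}[d_m,d_n]$ (for suitable $m,n$) produces the formula defining $V(a,b)$ in Definition~\ref{def:intermediate-series}, identifying $V$ with an irreducible subquotient of some $V(a,b)$. The detailed fitting of parameters $(a,b)$, which I would extract from the eigenvalues of $d_0$ and the scalars by which $d_{\pm 1}$ act between adjacent one-dimensional weight spaces, completes the identification. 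The proposition then follows, referencing~\cite{Mat92} for the technical steps in establishing uniform boundedness.
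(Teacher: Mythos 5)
There is nothing in the paper to compare against step by step: Proposition~\ref{prop:Vir-module-classification} is imported verbatim from \cite[Theorem~1]{Mat92} and is used as a black box (the paper never reproves it). If your intention is likewise to invoke \cite{Mat92} for the hard analytic core, then your write-up is essentially a restatement of the citation and is fine on those terms; but read as a proof sketch it has two genuine gaps, one acknowledged and one not. The acknowledged one is the dichotomy ``no highest/lowest weight vector $\Rightarrow$ uniformly bounded'': your discussion of $d_{-m}^k d_m^k$ and ``locally finite endomorphism arguments'' is only a pointer to where Mathieu's work happens, not an argument, and you say so yourself. (Note that the paper's own analogue of this dichotomy for $\cV=\Vir\otimes A$, Proposition~\ref{prop:wt-spaces}, is not proved from scratch either: it leans on \cite[Lemma~1.6]{Mat92}, \cite[Theorem~3.4]{MP91b} and \cite[Corollary~III.3]{MP91a}.)

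The unacknowledged gap is more serious: the claim that the $\mathfrak{sl}_2$-triple $\{d_{-1},d_0,d_1\}$ together with a uniform bound ``forces the weight spaces to be at most one-dimensional in the generic range'' does not hold as stated. The restriction of $V$ to this $\mathfrak{sl}_2$ is far from irreducible, and a uniformly bounded weight module over $\mathfrak{sl}_2$ can perfectly well have multiplicities equal to the bound; nothing about the triple alone collapses them to one. The statement that a nontrivial uniformly bounded irreducible quasifinite $\Vir$-module has all weight multiplicities at most one (and is then an irreducible subquotient of some $V(a,b)$ as in Definition~\ref{def:intermediate-series}) is itself a substantial theorem, due to Mathieu and Martin--Piard, whose proof uses the full Virasoro structure, not just one $\mathfrak{sl}_2$. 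So either you cite \cite{Mat92} for both halves --- in which case you have reproduced the paper's approach, namely quoting the result --- or you must supply genuine arguments for both the uniform-boundedness step and the multiplicity-one step; as written, neither is supplied, and the $\mathfrak{sl}_2$ reduction would fail.
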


\begin{corollary} \label{cor:Vir-UBM=int-series}
  Any nontrivial uniformly bounded irreducible quasifinite $\Vir$-module is a module of the intermediate series.
\end{corollary}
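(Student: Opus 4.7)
The strategy is to invoke Proposition~\ref{prop:Vir-module-classification}, which asserts that any irreducible quasifinite $\Vir$-module is a highest weight module, a lowest weight module, or a module of the intermediate series. Since the last class is precisely the conclusion we seek, it suffices to exclude the highest and lowest weight cases under the hypotheses that $V$ is nontrivial and uniformly bounded. Via the involution $d_n \mapsto -d_{-n}$, $c \mapsto -c$---which interchanges highest and lowest weight modules while preserving uniform boundedness---we may reduce to ruling out nontrivial uniformly bounded irreducible highest weight modules.

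So suppose, for contradiction, that $V = V(\varphi)$ is such a module with highest weight vector $v$ of weight $\lambda$. Since $\Vir$ is perfect, every one-dimensional representation is trivial, so nontriviality forces $\dim V \ge 2$, and in particular $\cV_- v \ne 0$ (otherwise $\C v$ would be a $\Vir$-stable line, and since $v$ generates $V$ we would get $V = \C v$, hence trivial). The goal is to exhibit weight spaces $V_{\lambda - n}$ whose dimensions grow without bound as $n \to \infty$, directly contradicting uniform boundedness.

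The principal obstacle is producing an unbounded supply of linearly independent vectors among the $V_{\lambda - n}$. A natural approach is to observe that, since $d_{-1}$ and $d_{-2}$ Lie-generate $\cV_-$, at least one of $d_{-1} v, d_{-2} v$ is nonzero, and then to use the Virasoro commutation relations to show inductively that a PBW-type family such as $\{ d_{-1}^a d_{-2}^b v : a,b \ge 0,\ a + 2b = n \}$ contributes a number of linearly independent elements to $V_{\lambda - n}$ that grows without bound in $n$. This is the delicate step, since the irreducibility of $V$ (which is a proper quotient of the Verma module $M(\varphi)$ in general) may force nontrivial relations among such monomials, and one must rule out a collapse of all but boundedly many of them.

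An alternative, quicker route is to appeal to more refined results inside Mathieu's paper (or in Kac--Raina), which already identify the nontrivial uniformly bounded irreducible quasifinite $\Vir$-modules as precisely the modules of the intermediate series; on this route the corollary is immediate from the classification together with Lemma~\ref{lem:inter-series-dim-1-wt-spaces}.
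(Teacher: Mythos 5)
Your reduction is exactly the paper's: apply Proposition~\ref{prop:Vir-module-classification} and then exclude nontrivial uniformly bounded highest (equivalently, by the involution, lowest) weight irreducibles. But that exclusion is the entire content of the corollary, and you do not establish it. Your primary route --- showing that enough of the monomials $d_{-1}^a d_{-2}^b v$ survive in the irreducible quotient to make $\dim V_{\lambda-n}$ unbounded --- is abandoned at precisely the point where it becomes nontrivial: for degenerate highest weights (the interesting case, e.g.\ minimal-model modules) the maximal submodule of the Verma module is large and kills many such monomials, and ruling out a collapse to bounded multiplicities requires real input about the submodule structure (singular vectors, or the Feigin--Fuchs/character results), not just the commutation relations. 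As written, this is a genuine gap rather than a routine detail.

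Your fallback of citing refined results from the literature is in fact what the paper does: its proof consists of quoting \cite[Corollary~III.3]{MP91a}, which states precisely that nontrivial highest and lowest weight $\Vir$-modules are not uniformly bounded, and then invoking Proposition~\ref{prop:Vir-module-classification}. A citation of that precision (Martin--Piard, or the cuspidal-module step inside Mathieu's proof, which classifies uniformly bounded irreducibles as tensor density modules) closes the argument; the vague pointer to ``Mathieu or Kac--Raina'' does not, since neither the statement of Proposition~\ref{prop:Vir-module-classification} as recorded in the paper nor Lemma~\ref{lem:inter-series-dim-1-wt-spaces} by itself rules out a uniformly bounded nontrivial highest weight module.
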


\begin{proof}
  It is shown in \cite[Corollary~III.3]{MP91a} that the nontrivial highest and lowest weight $\Vir$-modules are not uniformly bounded.  The result then follows from Proposition~\ref{prop:Vir-module-classification}.\qed
\end{proof}

%
\section{Dimensions of weight spaces} \label{sec:wt-spaces}
%

In this section, we prove an important result about the behavior of dimensions of weight spaces of quasifinite $\cV$-modules.  This is an analogue of \cite[Lemma~1.7]{Mat92} for the classical Virasoro algebra $\Vir$.  It was proven in \cite[Theorem~3.1]{GLZ08} for the case $A = \C[t,t^{-1}]$.

\begin{proposition} \label{prop:wt-spaces}
  Every irreducible quasifinite $\cV$-module is a highest weight module, a lowest weight module, or a uniformly bounded module.
\end{proposition}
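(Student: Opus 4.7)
I would adapt the strategy of Mathieu \cite{Mat92}, later extended to the loop-Virasoro case in \cite{GLZ08}. The plan is to argue by contradiction: assume $V$ is irreducible quasifinite with weight decomposition $V = \bigoplus_{i \in \Z} V_{\alpha+i}$, and that $V$ is neither a highest weight nor a lowest weight module. The goal is to deduce that $V$ is uniformly bounded. By the remark following the definition of highest weight modules, the assumption translates to the statement that no nonzero vector in $V$ is annihilated by $\cV_+$, and no nonzero vector in $V$ is annihilated by $\cV_-$.

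The first main step is to strengthen this into a statement about the classical subalgebra $\Vir \otimes 1 \subseteq \cV$: I would prove that no nonzero vector $v \in V$ can be annihilated by $d_n \otimes 1$ for all sufficiently large $n > 0$ (and likewise for $-n$). The key device is the commutation relation
\[
  [d_0 \otimes f,\, d_n \otimes 1] = n\,(d_n \otimes f), \quad n \neq 0,
\]
which lets one express $(d_n \otimes f)v$ in terms of $(d_n \otimes 1)(d_0 \otimes f)v$ whenever $(d_n \otimes 1)v = 0$. Since $d_0 \otimes A$ preserves each weight space and each weight space is finite-dimensional, the subspace $W \subseteq V_\mu$ of vectors annihilated by $d_n \otimes 1$ for all $n > N$ is finite-dimensional, and the $A$-module it generates under the $d_0 \otimes A$-action is still finite-dimensional. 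Iterating the construction inside this $A$-stable subspace would then yield a nonzero vector annihilated by all of $\cV_+$, contradicting the hypothesis that $V$ is not highest weight.

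Once the first step is in hand, the second step is classical: for each $n > 0$ the elements $\{d_{-n} \otimes 1,\, d_0 \otimes 1,\, d_n \otimes 1\}$ span, modulo the scalar action of $c \otimes 1$, an $\mathfrak{sl}_2$-triple inside $\cV$. Standard $\mathfrak{sl}_2$ representation theory bounds $\dim V_{\alpha+i}$ in terms of the multiplicity of $\mathfrak{sl}_2$-highest weight vectors of weight $\alpha+i$, and the first step rules out large numbers of such vectors (since an $\mathfrak{sl}_2$-highest weight vector is annihilated by $d_n \otimes 1$, and a growing family of such vectors would produce one annihilated by $d_{kn} \otimes 1$ for all $k \geq 1$). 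Combining the estimates for a sequence of values of $n$ forces $\dim V_{\alpha+i}$ to be bounded uniformly in $i$, as required.

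The main obstacle is the first step. In the classical Virasoro setting one works entirely inside $\Vir$, whereas in the map Virasoro setting one must control the interaction between $d_n \otimes 1$ and $d_n \otimes f$ for $f$ nonconstant. The identity above reduces this interaction to the $d_0 \otimes A$-action on weight spaces, which is an action of the finitely generated commutative algebra $A$ on a finite-dimensional vector space and therefore factors through a finite-dimensional quotient. It is this finiteness — a genuinely new ingredient compared with the classical case — that prevents the extra generators from creating unforeseen highest weight vectors and allows Mathieu's argument to go through.
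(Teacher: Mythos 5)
Your overall strategy (assume $V$ is neither highest nor lowest weight and deduce uniform boundedness) is a legitimate contrapositive of the paper's statement, but both of your steps have genuine gaps, and the crucial one is Step 1. The heart of Step 1 is the passage from ``some nonzero $v$ is killed by $d_n \otimes 1$ for all large $n$'' to ``some nonzero vector is killed by all of $\cV_+$,'' and your sketch does not supply it. The joint kernel $W = \{w \in V_\mu \mid (d_n \otimes 1)w = 0,\ n > N\}$ is \emph{not} stable under $d_0 \otimes A$: if $(d_n \otimes 1)w = 0$, then $(d_n \otimes 1)(d_0 \otimes f)w = -n(d_n \otimes f)w$, which is precisely the quantity you are trying to show vanishes, so the proposed ``iteration'' never gets started; the fact that the commuting family $d_0 \otimes A$ acts on the finite-dimensional space $V_\mu$ through a finite-dimensional quotient gives common eigenvectors in $V_\mu$, but not inside $W$. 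Note that even for $A = \C$ this bridging step is not formal --- it is the content of Mathieu's Lemma~1.6 \cite{Mat92} --- and for general $A$ one must in addition cross the $A$-direction, which is exactly where the paper does its real work: it argues in the opposite direction (assume $V$ is not uniformly bounded), restricts to $\Vir \otimes 1$, invokes the Martin--Piard decomposition \cite[Theorem~3.4]{MP91b} to produce a $\Vir$-highest weight vector $v$ of weight $\lambda$ whose irreducible quotient $V(c',\lambda)$ is not uniformly bounded, then for \emph{each} $f \in A$ uses the dimension count $\dim V(c',\lambda)_{\lambda-k} > \dim V_\lambda$ to find $w_f$ with $(d_k \otimes f)w_f = 0$, and finally transfers this by a commutator computation to $(d_{2k+j} \otimes f)v = 0$ for all $f$ and all large $j$, so that $\cV_{\ge M}v = 0$ and the analogue of Mathieu's Lemma~1.6 applies. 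Nothing in your sketch plays the role of this dimension-count-plus-transfer argument.

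Step 2 is also not a valid argument as stated. Restricted to the triple spanned by $d_{\pm n} \otimes 1$ and $-2n\,d_0 \otimes 1 + \tfrac{n^3-n}{12}\,c \otimes 1$, the module $V$ is an infinite-dimensional weight $\mathfrak{sl}_2$-module with generally non-integral weights, and for such modules weight multiplicities are \emph{not} controlled by counting $\mathfrak{sl}_2$-highest weight vectors (a direct sum of many dense $\mathfrak{sl}_2$ weight modules has no highest weight vectors at all, yet arbitrarily large multiplicities); moreover, a highest weight vector for the $n$-th triple is only killed by $d_n \otimes 1$, and you give no mechanism producing a single vector killed by $d_{kn} \otimes 1$ for all $k$. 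If Step 1 were actually established (in both directions), there is an elementary replacement for Step 2: a vector killed by both $d_n \otimes 1$ and $d_{n+1} \otimes 1$ is killed by $d_m \otimes 1$ for all $m$ in the numerical semigroup generated by $n$ and $n+1$, hence for all $m \gg 0$, hence is zero; this gives $\dim V_\mu \le \dim V_{\mu+n} + \dim V_{\mu+n+1}$ for all $n \ge 1$, and together with the downward analogue these inequalities force uniform boundedness. But that observation does not rescue the proposal, because Step 1 --- the analogue of Mathieu's Lemma~1.6 strengthened to handle the $A$-direction --- is the missing core.
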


\begin{proof}
  Let $V$ be an irreducible quasifinite $\cV$-module that is not uniformly bounded.  Let $W$ be a minimal $\Vir$-submodule of $V$ such that $V/W$ is trivial as a $\Vir$-module, and let $T$ be the maximal trivial $\Vir$-submodule of $W$.   Set $\bar W = W/T$.  By \cite[Theorem~3.4]{MP91b}, there exists a $\Vir$-module decomposition $\bar W = \bar W^+ \oplus \bar W^- \oplus \bar W^0$, where the weights of $\bar W^+$ are bounded above, those of $\bar W^-$ are bounded below, and $\bar W^0$ is uniformly bounded.  Without loss of generality, we assume $\bar W^+$ is nonzero.  For any $w \in W$, we denote its image in $\bar W$ by $\bar w$.  To show that $V$ is a highest weight module it suffices, by \cite[Lemma~1.6]{Mat92}, to show that there exists a nonzero $v \in V$ such that $\cV_{\ge n} v = 0$ for some $n \in \Z$.

  Since $V$ is irreducible, the central element $c$ acts as a constant $c'$ by Schur's Lemma.  Note that if $c' \ne 0$, then $V$ can have no trivial subquotients (in particular, $W=V$ and $T=0$).  Suppose $c'=0$ and the maximum weight of $\bar W^+$ is zero.  If we let $w \in W$ such that $\bar w$ is a nonzero vector of weight zero, then $U(\Vir)w/(U(\Vir)w \cap T) \subseteq \bar W$ is a highest weight module of highest weight zero which is nontrivial by our definition of $\bar W$.  Since its irreducible quotient is the trivial module, it must contain highest weight vectors of nonzero highest weight.  Choose $v \in W$ so that $\bar v$ is such a vector and let $\lambda$ be its weight.  In the other cases (i.e., $c' \ne 0$ or the maximum weight of $\bar W^+$ is nonzero), let $\lambda$ be the maximum weight of $\bar W^+$ and let $v \in W$ such that $\bar v$ is a nonzero highest weight vector of weight $\lambda$.

  Let $M = U(\Vir) v$.  Then $M/(M \cap T) \subseteq \bar W^+$ is a nontrivial highest weight $\Vir$-module of highest weight $\lambda$.  Let $M'$ be the largest $\Vir$-submodule of $M$ with $M'_\lambda=0$.  Then $M \cap T \subseteq M'$ and $M/M'$ is isomorphic to the nontrivial irreducible $\Vir$-module $V(c',\lambda)$.  It follows from \cite[Corollary~III.3]{MP91a} that $V(c',\lambda)$ is not uniformly bounded.  Thus there exists $k \in \N$ such that $\dim V(c',\lambda)_{\lambda-k} > \dim V_\lambda$.  For $f \in A$, consider the linear map
  \[
    d_k \otimes f : M''_{\lambda-k} \to V_\lambda,
  \]
  where $M''_{\lambda-k}$ is a vector space complement to $M'_{\lambda-k}$ in $M_{\lambda-k}$.  Since
  \[
    \dim M''_{\lambda-k} = \dim V(c',\lambda)_{\lambda-k} > \dim V_\lambda,
  \]
  this map has nonzero kernel.  Thus there exists a nonzero $w_f \in M''_{\lambda-k}$ such that $(d_k \otimes f)w_f = 0$.

  Let $N=\max(1,-\lambda,-2k)$.  Then, for all $j > N$, we have $d_{k+j} w_f \in M_{\lambda + j} = 0$.  Thus
  \begin{equation} \label{eq:wf-killed}
    (d_{2k+j} \otimes f)w_f = -\frac{1}{j}[d_{k+j},d_k \otimes f]w_f = 0 \quad \forall\ j > N.
  \end{equation}
  Since $w_f \in M_{\lambda-k} \setminus M'_{\lambda-k}$, there exists $z \in \C$ and $i_1,\dots,i_r \in \N_+$ with $i_1 + \dots + i_r = k$ such that $z d_{i_1} \cdots d_{i_r} w_f = v$.  Using~\eqref{eq:wf-killed}, for $j > N$ we have
  \begin{align*}
    (d_{2k+j} \otimes f)v &= (d_{2k+j} \otimes f)(zd_{i_1} \cdots d_{i_r})w_f \\
    &=  [d_{2k+j} \otimes f,zd_{i_1} \cdots d_{i_r}] w_f \\
    &= z \sum_{\ell=1}^r  (i_\ell - 2k - j) d_{i_1} \cdots d_{i_{\ell-1}} (d_{2k+j+i_\ell} \otimes f) d_{i_\ell+1} \cdots d_{i_r} w_f.
  \end{align*}
  Continuing to move the terms of the form $d_m \otimes f$ to the right and using~\eqref{eq:wf-killed}, we see that
  \[
    (d_{2k+j} \otimes f)v = 0 \quad \forall\ f \in A,\ j > N.
  \]
  In other words $\cV_{\ge 2k+N+1} v = 0$, completing the proof.\qed
\end{proof}

%
\section{Uniformly bounded modules} \label{sec:UBM}
%

In this section, we classify the uniformly bounded $\cV$-modules.  We show that they are all single point evaluation modules corresponding to $\Vir$-modules of the intermediate series.  In the case $A = \C[t,t^{-1}]$, this was proven in \cite[Theorem~5.1]{GLZ08}.

\begin{proposition} \label{prop:irred-killed-by-finite-codim-ideal}
  Suppose $V$ is a uniformly bounded irreducible $\cV$-module.  Then $(\Vir \otimes J) V = 0$ for some ideal $J \trianglelefteq A$ of finite codimension.  In particular, the uniformly bounded irreducible $\cV$-modules have finite support.
\end{proposition}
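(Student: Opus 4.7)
The statement reduces to producing a finite-codimension ideal $J \trianglelefteq A$ with $(\Vir \otimes J) V = 0$. Indeed, such $J$ gives $\Ann_A V \supseteq J$, so $A/\Ann_A V$ is finite-dimensional and hence Artinian; an Artinian commutative ring has only finitely many maximal ideals, so $\Supp_A V = \{\fm \in \maxSpec A : \fm \supseteq \Ann_A V\}$ is finite.

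I would first fix $\lambda \in \C$ with $V_\lambda \neq 0$ and, for each $m \in \Z \setminus \{0\}$, consider
\[
  K_m^\lambda := \{f \in A : (d_m \otimes f) V_\lambda = 0\}.
\]
The identity $[d_0 \otimes g, d_m \otimes f] = m(d_m \otimes fg)$, combined with the fact that $d_0 \otimes g$ commutes with $d_0$ and hence preserves $V_\lambda$, shows that $K_m^\lambda$ is an ideal of $A$. Since $f \mapsto (d_m \otimes f)|_{V_\lambda}$ maps $A$ into $\Hom_\C(V_\lambda, V_{\lambda+m})$, a space of dimension at most $N^2$ (with $N$ the uniform weight-space bound), $K_m^\lambda$ has codimension at most $N^2$ in $A$.

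The key device is a submodule trick: for any ideal $J \trianglelefteq A$, the subspace $W_J := \{v \in V : (\Vir \otimes J) v = 0\}$ is a $\cV$-submodule of $V$, because for $v \in W_J$, $f \in J$, $x \in \Vir$, $y \otimes g \in \cV$, we have $(x \otimes f)(y \otimes g) v = (y \otimes g)(x \otimes f) v + ([x,y] \otimes fg) v = 0$ (using $fg \in J$). By irreducibility, $W_J$ is either $\{0\}$ or $V$, and in the latter case $J \subseteq \Ann_A V$. A useful preliminary reduction is that if $(d_m \otimes f)|_V = 0$ for any single $m \neq 0$, then $f \in \Ann_A V$: iterating the bracket identity
\[
  [d_p, d_m \otimes f] = (m - p)(d_{m + p} \otimes f) + \delta_{m + p, 0}\tfrac{p^3 - p}{12}(c \otimes f)
\]
for varying $p$ gives $(d_n \otimes f)|_V = 0$ for every $n \in \Z$ and the identity $(d_0 \otimes f)|_V = \tfrac{m^2 - 1}{24}\chi(f)\,\id_V$ (with $\chi : A \to \C$ the central character); comparing this identity for two different values of $m$ forces $\chi(f) = 0$ as soon as $V$ is nontrivial.

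The heart of the matter is therefore to produce a finite-codimension ideal $J$ with $W_J \neq 0$. A weight vector $v \in V_\lambda$ lies in $W_J$ only if both $\chi(f) = 0$ and $(d_k \otimes f) v = 0$ for every $k \in \Z$ and every $f \in J$---conditions that are not automatic from $f \in K_m^\lambda$ alone. My plan is to take $J$ as a finite intersection of ideals of the form $K_m^{\mu_i}$ for finitely many neighboring weights $\mu_i$. A first input: intersecting $K_m^\lambda \cap K_m^{\lambda - m}$ and applying $[d_m \otimes f, d_{-m}]$ to a vector $v \in V_\lambda$ forces $(d_0 \otimes f)|_{V_\lambda} = \tfrac{m^2 - 1}{24}\chi(f)\,\id_{V_\lambda}$, and intersecting further with the analogous ideals for a second $m'$ with $m' \neq \pm m$ forces $\chi(f) = 0$ and $(d_0 \otimes f)|_{V_\lambda} = 0$. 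One must then propagate these ``on $V_\lambda$'' vanishings through commutator identities to the full condition $(\Vir \otimes J) v = 0$, which amounts to controlling $d_k \otimes J$ on $v$ for every $k \in \Z$. Each commutator step reduces $(d_k \otimes f) v$ to actions of the form $d_{k'} \otimes f$ on a neighboring weight space $V_{\lambda + p}$; iterating boundedly many times and absorbing the resulting conditions into further intersections with ideals $K_{m''}^{\mu''}$ should produce a finite-codimension $J$ for which $V_\lambda \subseteq W_J$. Establishing that only finitely many such refinements are needed, using the uniform bound $N$ to terminate the induction, is the main technical obstacle.
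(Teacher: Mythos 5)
Your setup matches the paper's: the ideals you call $K_m^\lambda$ (the paper's $I_j$), their finite codimension bound $N^2$, and the reduction of finite support to the existence of $J$ are all correct, and your observation that $W_J = \{v \in V : (\Vir \otimes J)v = 0\}$ is a submodule is a perfectly good (in fact slightly cleaner) substitute for the paper's induction on the PBW filtration in the final propagation step. But the heart of the proof is exactly the step you flag as "the main technical obstacle," and it is genuinely missing: you need a \emph{single} ideal of finite codimension contained in $K_k^\lambda$ for \emph{all} $k \in \Z \setminus \{0\}$ simultaneously, i.e.\ infinitely many conditions, and a finite intersection of ideals $K_m^{\mu_i}$ together with "iterating commutator steps boundedly many times" does not produce this: each commutator step (e.g.\ $[d_1 \otimes f_1, d_{j+2} \otimes f] $ acting on $V_\lambda$) only pushes you to the next index at the cost of multiplying by another element of $K_1^\lambda$, so without a further idea you only obtain $K_1^\lambda$-power conditions whose exponent grows with $k$, and the resulting "ideal" is an infinite intersection with no a priori codimension bound.

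The paper closes this gap with a uniformity argument you did not find. First one shows $I_1^{j}I_2 \subseteq I_{j+2}$ for all $j \ge 1$ by the commutator iteration you describe. Then, for each fixed $j$, one considers the descending chain $(I_1^m I_2 + I_j)/I_j$ inside $A/I_j$; since $\dim A/I_j \le N^2$ \emph{uniformly in $j$}, this chain stabilizes after at most $N^2$ steps, and combining stabilization with $I_1^{j-2}I_2 \subseteq I_j$ forces $I_1^{N^2} I_2 \subseteq I_j$ for every $j \ge 1$, with the exponent $N^2$ independent of $j$. The analogous statement holds for negative indices, and $J := I_{-1}^{N^2} I_{-2}\, I_1^{N^2} I_2$ is then a finite-codimension ideal killing $d_k \otimes J$ on $V_{\alpha+i}$ for all $k \ne 0$; the actions of $d_0 \otimes J$ and $c \otimes J$ on that weight space are then handled by writing elements of $J$ as sums of products $f_{-1}f_1$ and $f_{-2}f_2$ and using $[d_{-1}\otimes f_{-1}, d_1 \otimes f_1]$ and $[d_{-2}\otimes f_{-2}, d_2\otimes f_2]$ (note the paper works with a single weight space throughout; your intersection over "neighboring weights $\mu_i$" is unnecessary once $J$ annihilates one weight space, by your own $W_J$ argument). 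Without this chain-stabilization idea, or some replacement for it, your outline does not constitute a proof.
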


\begin{proof}
  If $V$ is trivial, we simply take $J=A$.  We therefore assume that $V$ is nontrivial.  We have a weight space decomposition $V = \bigoplus_{i \in \Z} V_{\alpha + i}$ for some $\alpha \in \C$.  Since $V$ is uniformly bounded, we can choose $N \in \N$ such that $\dim V_{\alpha+i} \le N$ for all $i \in \Z$.  Fix $i \in \Z$ such $V_{\alpha+i} \ne 0$.  For $j \in \Z \setminus \{0\}$, define
  \[
    I_j = \{f \in A\ |\ (d_j \otimes f) V_{\alpha+i} = 0\}.
  \]
  Clearly, $I_j$ is a linear subspace of $A$.  For any $f \in I_j$, $g \in A$, and $v \in V_{\alpha+i}$, we have
  \[
    j(d_j \otimes gf) v = [d_0 \otimes g, d_j \otimes f] v = (d_0 \otimes g)(d_j \otimes f)v - (d_j \otimes f)(d_0 \otimes g)v = 0,
  \]
  where we have used the fact that elements of $d_0 \otimes A$ preserve weights.  Thus $I_j$ is an ideal of $A$ for all $j \in \Z \setminus \{0\}$.  Since $I_j$ is the kernel of the linear map
  \[
    A \to \hom_\C(V_{\alpha+i}, V_{\alpha+i+j}),\quad f \mapsto (v \mapsto (d_j \otimes f)v),
  \]
  we have that $\dim A/I_j \le \dim \hom_\C(V_{\alpha+i},V_{\alpha+i+j}) \le N^2$ for all $j \in \Z \setminus \{0\}$.

  We claim that $I_1^j I_2 \subseteq I_{j+2}$ for all $j \ge 1$.  Since
  \[
    (d_3 \otimes f_1 f_2)V_{\alpha+i} = [d_1 \otimes f_1, d_2 \otimes f_2] V_{\alpha+i} = 0 \quad \forall\ f_1 \in I_1,\ f_2 \in I_2,
  \]
  the case $j=1$ is proved.  Assume the result is true for some fixed $j \ge 1$.  Then
  \[
    (j+1)(d_{j+3} \otimes f_1 f)V_{\alpha+i} = [d_1 \otimes f_1, d_{j+2} \otimes f] V_{\alpha_i} = 0 \quad \forall\ f_1 \in I_1,\ f \in I_1^j I_2,
  \]
  and the general result follows by induction.

  We next claim that $I_1^{N^2}I_2 \subseteq I_j$ for all $j \ge 1$.  The result is clear for $j=1,2$, so we assume $j \ge 3$.  Consider the chain of subspaces
  \[
    A/I_j \supseteq (I_2 + I_j)/I_j \supseteq (I_1I_2 + I_j)/I_j \supseteq (I_1^2I_2 + I_j)/I_j \supseteq \dots.
  \]
  Since $\dim A/I_j \le N^2$, this chain must stabilize and so we have $I_1^mI_2 + I_j = I_1^{m+1}I_2 + I_j$ for some $m \le N^2$.  This implies that $I_1^\ell I_2 + I_j = I_1^m I_2 + I_j$ for all $\ell \ge m$.  Now, by the above, we have $I_1^{j-2}I_2 \subseteq I_j$, which implies that $I_1^\ell I_2 + I_j = I_j$ for sufficiently large $\ell$.  Thus $I_1^m I_2 + I_j = I_j$, i.e., $I_1^m I_2 \subseteq I_j$, and so $I_1^{N^2} I_2 \subseteq I_j$ as desired.

  Arguments analogous to those given above show that $I_{-1}^{N^2}I_{-2} \subseteq I_{-j}$ for all $j \ge 1$.  It follows that
  \begin{equation} \label{eq:J-def}
    J := I_{-1}^{N^2}I_{-2}I_1^{N^2}I_2 \subseteq I_j \quad \forall\ j \in \Z \setminus \{0\}.
  \end{equation}
  Note that $J$ has finite codimension in $A$ since $I_{-1}, I_{-2}, I_1, I_2$ do.  Now, by definition, any element $f \in J$ can be written as a sum of elements of the form $f_{-1} f_1$ and as a sum of elements of the form $f_{-2}f_2$ for $f_j \in I_j$, $j \in \{\pm 1, \pm 2\}$.  Since
  \begin{gather*}
    2d_0 \otimes f_{-1}f_1 = [d_{-1} \otimes f_{-1}, d_1 \otimes f_1] \quad \text{and} \\
    4d_0 \otimes f_{-2}f_2 - (c \otimes f_{-2}f_2)/2 = [d_{-2} \otimes f_{-2}, d_2 \otimes f_2]
  \end{gather*}
  act as zero on $V_{\alpha+i}$, it follows that $d_0 \otimes J$ and $c \otimes J$ annihilate $V_{\alpha+i}$.  Combined with~\eqref{eq:J-def}, this gives that $(\Vir \otimes J) V_{\alpha+i} = 0$.

  Since $V_{\alpha + i} \ne 0$ and $V$ is irreducible, we have $U(\cV) V_{\alpha+i} = V$.  To show that $(\Vir \otimes J)V = 0$, it therefore suffices to show that $(\Vir \otimes J)U_n(\cV) V_{\alpha+i}=0$ for all $n \in \N$.  We do this by induction, the case $n=0$ having been proven above.  Assume the result is true for $k < n$.  An arbitrary element of $U_n(\cV) V_{\alpha+i}$ can be written as a sum of elements of the form
  \begin{gather*}
    (u_1 \otimes f_1) \cdots (u_s \otimes f_s) v_{\alpha+i},\\ \text{where} \quad s \le n,\ v_{\alpha+i} \in V_{\alpha + i},\ u_j \in \Vir,\ f_j \in A,\ j=1,\dots,s.
  \end{gather*}
  For $u \in \Vir$ and $f \in J$, we have
  \begin{multline*}
    (u \otimes f) (u_1 \otimes f_1) \cdots (u_s \otimes f_s) v_{\alpha+i} \\
    = \sum_{j=1}^s (u_1 \otimes f_1) \cdots (u_{j-1} \otimes f_{j-1}) ([u,u_j] \otimes ff_j) (u_{j+1} \otimes f_{j+1}) \cdots (u_s \otimes f_s) v_{\alpha+i}=0,
  \end{multline*}
  where in the last equality we used the induction hypothesis.  It follows that we have $(\Vir \otimes J) V = 0$ as desired.\qed
\end{proof}

\begin{proposition} \label{prop:UBM-have-single-point-support}
  Suppose $V$ is a uniformly bounded irreducible $\cV$-module.  Then $(\Vir \otimes J) V = 0$ for some ideal $J \trianglelefteq A$ of finite codimension with $J$ supported at a single point (i.e., $\rad J$ is a maximal ideal of $A$).  In other words, the nontrivial uniformly bounded irreducible $\cV$-modules have support at a single point.
\end{proposition}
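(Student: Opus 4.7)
The plan is to reduce the proposition to showing that the finite-dimensional quotient $\bar A := A/\Ann_A V$ is a local ring. By Proposition~\ref{prop:irred-killed-by-finite-codim-ideal}, $\Ann_A V$ has finite codimension in $A$, so $\bar A$ is a finite-dimensional Artinian commutative $\C$-algebra and decomposes uniquely as $\bar A \cong \prod_{i=1}^k \bar A_i$ into local $\C$-algebras via primitive orthogonal idempotents $e_1, \ldots, e_k$. When $k = 1$, the unique maximal ideal of $\bar A$ is nilpotent, say of index $N$, and taking $J := \fm^N$ (with $\fm \trianglelefteq A$ the preimage of this maximal ideal) gives a finite-codimension ideal with $(\Vir \otimes J) V = 0$ and $\rad J = \fm$ a single maximal ideal, as required. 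When $k \ge 2$, no ideal $J \subseteq \Ann_A V$ can have maximal radical: indeed $\rad J \subseteq \rad(\Ann_A V)$ would be contained in each of the several distinct maximal ideals $\fm_i \supseteq \Ann_A V$, forcing them to coincide with $\rad J$, a contradiction. So the task reduces to ruling out $k \ge 2$.

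Suppose for contradiction $k \ge 2$. Grouping factors, write $\bar A = B_1 \oplus B_2$ with $B_1, B_2 \ne 0$, and set $\g_i := \Vir \otimes B_i$. Then $V$ is an irreducible module for $\Vir \otimes \bar A = \g_1 \oplus \g_2$, a direct sum of commuting Lie algebra ideals. Each $\g_i$ is perfect, since $\Vir$ is perfect and each local Artinian $B_i$ satisfies $B_i^2 = B_i$; consequently $\g_i$ has no nontrivial one-dimensional representation, and a short argument (using that any finite-dimensional representation of $\Vir$ is trivial and that $d_m \otimes B_i$ for $m \ne 0$ must then act as zero) shows that any finite-dimensional irreducible $\g_i$-module is trivial. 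Moreover, $\g_i$ acts nontrivially on $V$, since otherwise $B_i \subseteq \Ann_A V$, contradicting $B_i \ne 0$ in $\bar A$.

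The commuting operators $d_0 \otimes e_1$ and $d_0 \otimes e_2$, whose sum is $d_0$, induce a joint generalized eigenspace decomposition on each finite-dimensional weight space, yielding a bi-grading $V_\lambda = \bigoplus_{\mu+\nu = \lambda} V^{(\mu,\nu)}$. The identity $[d_0 \otimes e_i, d_m \otimes f] = m(d_m \otimes e_i f)$ implies that $\g_1$-elements shift the $\mu$-index by their weight while preserving $\nu$, and $\g_2$-elements do the reverse. I plan to exploit this bi-grading, together with Dixmier's version of Schur's lemma (applicable because $V$ has countable dimension over the algebraically closed field $\C$, since $A$ is finitely generated), to establish a tensor-product factorization $V \cong V_1 \otimes V_2$ of $(\g_1 \oplus \g_2)$-modules with each $V_i$ an irreducible $\g_i$-module: concretely, fix $\nu_0$ with $V^{(\cdot,\nu_0)} := \bigoplus_\mu V^{(\mu,\nu_0)} \ne 0$, extract from this $\g_1$-stable subspace an irreducible $\g_1$-subquotient $V_1$, and recover $V_2$ as the corresponding multiplicity space via the commuting $\g_2$-action.

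Given such a factorization, each $V_i$ is infinite-dimensional by the perfectness argument above, and as an irreducible weight module over $\g_i$ its weights lie in a single coset $\alpha_i + \Z$ (analogue of Lemma~\ref{lem:weight-string}), so infinitely many of its weight spaces are nonzero. Consequently $V_\lambda = \bigoplus_{\mu + \nu = \lambda} (V_1)_\mu \otimes (V_2)_\nu$ is an infinite direct sum of nonzero summands, contradicting the uniform boundedness of $V$. The main obstacle is the rigorous construction of the tensor-product decomposition in infinite dimensions: the bi-grading furnishes the correct combinatorial skeleton, but genuinely identifying irreducible $\g_i$-factors and verifying that the multiplication map $V_1 \otimes \Hom_{\g_1}(V_1, V) \to V$ is a $(\g_1 \oplus \g_2)$-equivariant isomorphism is delicate and relies on Dixmier--Schur together with the uniform bound on weight-space dimensions.
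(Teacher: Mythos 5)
Your setup is essentially the paper's: reduce modulo $\Ann_A V$ (finite codimension by Proposition~\ref{prop:irred-killed-by-finite-codim-ideal}), split the quotient into two nonzero commuting pieces $\g_1 \oplus \g_2$ acting nontrivially, and bi-grade $V$ by the commuting operators $d_0 \otimes e_1$, $d_0 \otimes e_2$ whose sum is $d_0$. The gap is at the step you yourself flag as the ``main obstacle'': the tensor factorization $V \cong V_1 \otimes V_2$ is never established, and it is genuinely problematic as sketched. Extracting an irreducible $\g_1$-\emph{subquotient} of the column $V^{(\cdot,\nu_0)}$ does not feed into the standard argument that the multiplication map $V_1 \otimes \Hom_{\g_1}(V_1,V) \to V$ is an isomorphism — that argument needs an irreducible $\g_1$-\emph{submodule} of $V$ (or a density/isotypic argument), and there is no obvious reason such a submodule exists here. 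Moreover, even granting the factorization, your concluding inference is too quick: each $V_i$ being infinite-dimensional with weights in a single coset $\alpha_i + \Z$ gives infinitely many nonzero weight spaces, but \emph{not} infinitely many complementary pairs $(\mu,\nu)$ with $\mu+\nu$ equal to one fixed $\lambda$; a priori the weight supports could be sparse, in which case $V_\lambda$ stays finite-dimensional. To close that you would still need what the paper uses: each factor is uniformly bounded and nontrivial, hence by Corollary~\ref{cor:Vir-UBM=int-series} and Lemma~\ref{lem:inter-series-dim-1-wt-spaces} (intermediate series) has nonzero weight spaces at \emph{every} weight of its coset except possibly $0$.

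That last observation also shows the tensor factorization is unnecessary, which is exactly how the paper proceeds. It proves the two operators $\delta_1,\delta_2$ act diagonalizably (an eigenvector generates $V$, and the adjoint action is diagonalizable with integer eigenvalues), obtaining $V = \bigoplus_{j,k} V_{(j,k)}$; it then shows no nonzero column $V_{(*,k)}$ can be a trivial $L_1$-module (a vector killed by $L_1$ would, since $[L_1,L_2]=0$ and $V$ is irreducible, force $L_1$ to kill all of $V$), so by the classification of uniformly bounded irreducible $\Vir$-modules one gets $V_{(j,k)} \ne 0$ whenever $\alpha + j \ne 0$ and $\beta + k \ne 0$; finally the anti-diagonal $\bigoplus_j V_{(j,-j)}$ sits inside the single weight space $V_{\alpha+\beta}$ and is infinite-dimensional, contradicting quasifiniteness. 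I would advise repairing your argument by replacing the factorization step with this direct analysis of the bi-graded pieces; as written, the proposal's central step is missing and its endgame does not follow from what is claimed.
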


\begin{proof}
  The result is clear if $V$ is trivial and so we assume it is nontrivial.  By Proposition~\ref{prop:irred-killed-by-finite-codim-ideal}, there exists an ideal $J \trianglelefteq A$ of finite codimension such that we have $(\Vir \otimes J) V = 0$.  Since $J$ has finite codimension, we may write $J = J_1 J_2 \dots J_\ell$ for ideals $J_1, \dots, J_\ell$ supported at distinct points.  Now, the action of $\Vir \otimes A$ on $V$ factors through
  \[
    (\Vir \otimes A)/(\Vir \otimes J) \cong (\Vir \otimes A/J) \cong \Vir \otimes ((A/J_1) \oplus \dots \oplus (A/J_\ell)) \cong \bigoplus_{i=1}^\ell (\Vir \otimes A/J_i).
  \]
  It suffices to show that at most one summand above acts nontrivially on $V$.  Without loss of generality, assume the first summand $L_1 := \Vir \otimes A/J_1$ acts nontrivially.  Define $L_2 = \bigoplus_{i=2}^\ell (\Vir \otimes A/J_i)$, $L=L_1 \oplus L_2$, and let
  \begin{gather*}
    \delta_1 = (d_0 \otimes (1 + J_1)) \in L_1,\quad \delta_2 = (0,d_0 \otimes (1 + J_2), \dots, d_0 \otimes (1+J_n)) \in L_2, \\
    \delta = \delta_1 + \delta_2.
  \end{gather*}
  Note that $d_0 v = \delta v$ for all $v \in V$ and that the actions of $\delta_1, \delta_2, \delta$ commute.  It follows that, for $i=1,2$, $\delta_i$ preserves the finite-dimensional $d_0$-eigenspaces.  Therefore $\delta_i$ has an eigenvector $v \in V$.  Since the action of $\delta_i$ on $L$ is diagonalizable and $v$ generates $V$ as a module over $L$, we see that $\delta_i$ acts diagonalizably on $V$ for $i=1,2$.

  Because the eigenvalues of the action of $d_0$ on $L$ are integers, the above discussions implies that we have a decomposition
  \[
    V = \bigoplus_{j,k \in \Z} V_{(j,k)},\quad V_{(j,k)} = \{v \in V\ |\ \delta_1 v = (\alpha + j)v,\ \delta_2 v = (\beta + k)v\},
  \]
  for some fixed $\alpha, \beta \in \C$.  Since $[L_1,L_2]=0$, for each $k \in \Z$ we have that $V_{(*,k)} := \bigoplus_{j \in \Z} V_{(j,k)}$ is an $L_1$-submodule of $V$.  None of these can be a nonzero trivial module since if $L_1$ acts by zero on any nonzero element $v \in V$, then, since $[L_1,L_2]=0$ and $V$ is irreducible (hence $v$ generates $V$ as an $L$-module and thus as an $L_2$-module), $L_1$ would act trivially on all of $V$ which contradicts our assumption.  Thus, since $V$ is uniformly bounded, by Corollary~\ref{cor:Vir-UBM=int-series} and Lemma~\ref{lem:inter-series-dim-1-wt-spaces} we must have that $V_{(j,k)} \ne 0$ for all $\alpha + j \ne 0$ whenever $V_{(*,k)} \ne 0$.  By an analogous argument, we can assume that $L_2$ acts nontrivially on all $V_{(j,*)}$, $\alpha+j \ne 0$.  It follows that $V_{(j,k)} \ne 0$ whenever $\alpha+j \ne 0$ and $\beta + k \ne 0$.  Now,
  \[
    V_{\alpha + \beta} \supseteq \bigoplus_{j \in \Z} V_{j,-j},
  \]
  with the right-hand space being infinite-dimensional.  This contradicts the fact that the weight spaces of $V$ are finite-dimensional, completing the proof.\qed
\end{proof}

\begin{remark}
  Proposition~\ref{prop:UBM-have-single-point-support} shows that the situation for uniformly bounded $\cV$-modules is quite different than for the finite-dimensional modules for $\g \otimes A$ or its equivariant analogue (the equivariant map algebras), when $\g$ is a finite-dimensional algebra.  In the latter case, irreducible modules can be supported at any finite number of points (see \cite{NSS09}).  This is not possible for uniformly bounded $\cV$-modules for the simple reason that a tensor product of two nontrivial uniformly bounded modules will always have infinite-dimensional weight spaces.  However, we will see in Section~\ref{sec:hw-modules} that the highest weight quasifinite $\cV$-modules can have support at more than one point.
\end{remark}

If we have a vector space decomposition $\g \cong W \oplus W'$ of a Lie algebra $\g$, we can pick ordered bases $\cB$ and $\cB'$ of $W$ and $W'$ (respectively) and obtain an ordered basis of $\g$ by declaring $b \ge b'$ for all $b \in \cB$, $b' \in \cB'$.  Then, by the PBW theorem, the set of monomials
\begin{multline*}
  \{x_1 \cdots x_n y_1 \cdots y_m\ |\ n,m \in \N,\ x_1,\dots,x_n \in \cB,\ x_1 \ge \dots \ge x_n,\\
  y_1,\dots,y_m \in \cB',\ y_1 \ge \dots \ge y_m\},
\end{multline*}
forms a basis of $U(\g)$.  By a slight abuse of terminology, we will denote by $U_n(W)$ the subspace of $U(\g)$ spanned by all monomials of the form $x_1 \cdots x_s$, $s \in \N$, $s \le n$, $x_1,\dots,x_s \in \cB$, $x_1 \ge \dots \ge x_s$, and we set $U(W) = \bigcup_n U_n(W)$.  We define $U_n(W')$ and $U(W')$ similarly.  Thus $U(\g) \cong U(W) \otimes U(W')$.  Note that when $W$ is actually a subalgebra of $\g$, $U(W)$ is the usual enveloping algebra of $W$ (and similarly for $W'$).

\begin{lemma} \label{lem:ab-ideal-commutation}
  Suppose $\fa$ is an abelian ideal of a Lie algebra $\g$ and fix a vector space decomposition $\g = W \oplus \fa$ so that $U(\g) \cong U(W) \otimes U(\fa)$.  Then, for all $n \in \N_+$,
  \begin{align*}
    [\fa,U_n(W) U(\fa)] &\subseteq U_{n-1}(W) U(\fa), \quad \text{and} \\
    \fa (U_n(W) U(\fa)) &\subseteq U_n(W) U(\fa).
  \end{align*}
\end{lemma}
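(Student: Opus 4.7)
The plan is to deduce both inclusions from a single filtration on $U(\g)$. Declare each element of the basis $\cB$ of $W$ to have degree $1$ and each element of the basis $\cB'$ of $\fa$ to have degree $0$, extend additively to monomials, and let $F_n U(\g)$ be the span of monomials of total degree at most $n$. Under the PBW decomposition $U(\g) \cong U(W) \otimes U(\fa)$, I expect to identify $F_n U(\g)$ with $U_n(W) U(\fa)$: an ordered PBW monomial contributes to $F_n$ exactly when at most $n$ of its factors come from $\cB$.

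First I would verify that this filtration is well-defined on $U(\g)$ (i.e.\ compatible with the relations $xy - yx = [x,y]$). Using that $\fa$ is an abelian ideal: for $x,y \in W$, one has $[x,y] \in \g = W \oplus \fa$ of degree at most $1$, strictly less than $\deg(xy) = 2$; for $x \in W$, $y \in \fa$, one has $[x,y] \in \fa$ of degree $0$, less than $\deg(xy) = 1$; and for $x,y \in \fa$, one has $[x,y]=0$. In each case the commutator has degree strictly below that of the monomial being reordered, so every straightening relation preserves the filtration and the identification $F_n U(\g) = U_n(W) U(\fa)$ is forced by PBW.

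With the filtration in hand, the first inclusion follows from the iterated Leibniz rule: for $a \in \fa$ and any monomial $z_1 \cdots z_k$ with $z_i \in \cB \cup \cB'$,
\[
  [a, z_1 \cdots z_k] = \sum_{i=1}^k z_1 \cdots z_{i-1} [a,z_i] z_{i+1} \cdots z_k.
\]
Terms where $z_i \in \cB' \subseteq \fa$ vanish because $\fa$ is abelian; terms where $z_i \in \cB \subseteq W$ replace a degree-$1$ factor by $[a,z_i] \in \fa$ of degree $0$, lowering the total filtration degree by one. Hence $[\fa, F_n U(\g)] \subseteq F_{n-1} U(\g)$. For the second inclusion, write $a \cdot X = [a,X] + X \cdot a$ for $X \in F_n U(\g)$; the first summand is in $F_{n-1} U(\g) \subseteq F_n U(\g)$ by what we just showed, and the second stays in $F_n U(\g)$ because right multiplication by $a \in \fa$ adds only a degree-$0$ factor.

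The only real obstacle is the bookkeeping needed to justify $F_n U(\g) = U_n(W) U(\fa)$ rigorously, since $W$ is merely a vector space complement and not a subalgebra, so straightening a non-ordered monomial into the PBW basis can spawn commutator terms that mix $W$ and $\fa$; checking that each such term still fits into $F_n$ is exactly the degree-count performed in the second paragraph. Everything after that is a one-line Leibniz computation.
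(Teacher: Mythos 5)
Your proof is correct, but it is organized differently from the paper's. The paper argues by induction on $n$: it applies the Leibniz rule to $[a, w_1 \cdots w_s u]$, pulls out $[a,w_1] \in \fa$, and uses the inductive hypothesis to move this element of $\fa$ past $w_2 \cdots w_s$; the filtration-compatibility that you establish is never made explicit there (for instance, the membership $w_2 \cdots w_s [a,w_1] u \in U_{s-1}(W)U(\fa)$ is simply asserted). You instead introduce the filtration of $U(\g)$ by the number of factors from $\cB$, prove via the straightening degree-count (using $[W,W] \subseteq W \oplus \fa$ in degree at most one, $[\fa,W] \subseteq \fa$, and $[\fa,\fa]=0$) that its $n$-th piece coincides with $U_n(W)U(\fa)$, and then both inclusions follow from a single application of the Leibniz rule together with $aX = [a,X] + Xa$. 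One small presentational point: since $F_n U(\g)$ is by definition the span of a specified set of products inside $U(\g)$, there is nothing to verify about it being ``well-defined''; what your degree count actually establishes is the nontrivial inclusion $F_n U(\g) \subseteq U_n(W)U(\fa)$, the reverse inclusion being immediate because the PBW monomials with at most $n$ factors from $\cB$ are among your spanning products. With that rephrasing your argument is complete; its merit is that it makes explicit a straightening fact the paper's induction uses implicitly, at the cost of the PBW bookkeeping, whereas the paper's induction is shorter because it recycles the statement of the lemma itself as the inductive hypothesis.
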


\begin{proof}
  Since the second inclusion follows easily from the first, we prove only the first, by induction on $n$.  The case $n=1$ follows immediately from the fact that for all $a \in \fa$, $w \in W$, $u \in U(\fa)$, we have
  \[
    [a,wu] = [a,w]u + w[a,u] = [a,w]u \in U(\fa),
  \]
  where we have used that $U(\fa)$ is commutative since $\fa$ is abelian.

  Now suppose $n > 1$.  The space $U_n(W) U(\fa)$ is spanned by elements of the form $w_1 \cdots w_s u$, where $s \le n$, $w_i \in W$ for $i =1,\dots,s$, $u \in U(\fa)$.  If $a \in \fa$, then
  \begin{align*}
    [a,w_1 \cdots w_s u] &= [a,w_1] w_2 \cdots w_s u + w_1 [a,w_2 \cdots w_s u] \\
    &= [[a,w_1],w_2 \cdots w_s]u + w_2 \cdots w_s [a,w_1] u + w_1 [a,w_2 \cdots w_s u].
  \end{align*}
  Now, $[a,w_1] \in \fa$ since $\fa$ is an ideal.  Therefore $[[a,w_1],w_2 \cdots w_s]u \in U_{s-2}(W) U(\fa) \linebreak \subseteq U_{n-1}(W) U(\fa)$ by the induction hypothesis.  In addition, $w_2 \cdots w_s [a,w_1] u \in U_{s-1}(W) U(\fa) \subseteq U_{n-1} U(\fa)$.  Finally, $[a,w_2 \cdots w_s u] \in U_{s-2}(W) U(\fa)$ by the induction hypothesis, and so $w_1 [a,w_2 \cdots w_s u] \in U_{s-1}(W) U(\fa) \subseteq U_{n-1}(W) U(\fa)$.  This completes the proof.\qed
\end{proof}

\begin{proposition} \label{prop:J2=0-kills-V}
  Suppose $V$ is a uniformly bounded irreducible $\cV$-module, with $A$ finite-dimensional.  Then $(\Vir \otimes J) V=0$ for any ideal $J \trianglelefteq A$ satisfying $J^2=0$.
\end{proposition}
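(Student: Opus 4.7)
The plan exploits the fact that, since $J^2 = 0$, the subspace $\fa := \Vir \otimes J$ is an abelian ideal of $\cV$.  The identity $x(yv) = y(xv) + [x,y]v$ for $x \in \cV$ and $y \in \fa$, combined with $[x,y] \in \fa$, shows that both $\fa V$ and the annihilator $V^{\fa} := \{v \in V : \fa v = 0\}$ are $\cV$-submodules of $V$.  By irreducibility each is $0$ or $V$; the conclusion $V^{\fa} = V$ is the desired statement, so I will argue by contradiction, assuming $\fa V = V$ and $V^{\fa} = 0$.  (The finite-dimensional case is disposed of separately: any finite-dimensional irreducible $\cV$-module is one-dimensional and factors through a character of $\cV$, but $\cV = [\cV, \cV]$ since $\Vir$ is perfect and $A$ is unital, so the only character is trivial and $\fa$ automatically acts as zero.)  Henceforth $V$ is taken to be infinite-dimensional.

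I next decompose $V$ into simultaneous generalized eigenspaces for the commuting family $\{d_0 \otimes g : g \in J\}$ on each weight space $V_\mu$.  Because $[d_0 \otimes g, d_k \otimes h] = k(d_k \otimes gh) = 0$ for $h \in J$ (as $gh \in J^2 = 0$), this family commutes with all of $\fa$; the commutator identity $(d_0 \otimes g - \phi(g))(xv) = x(d_0 \otimes g - \phi(g))v + [d_0 \otimes g, x]v$ then shows that each global generalized eigenspace $V^\phi := \bigoplus_\mu V^\phi_\mu$, indexed by linear functionals $\phi : J \to \C$, is a $\cV$-submodule; irreducibility singles out a unique $\phi_0$ with $V = V^{\phi_0}$.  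Schur's lemma provides a scalar $\chi(g)$ through which each central element $c \otimes g$ acts.  The Virasoro relation $[d_n, d_{-n} \otimes g] = -2n(d_0 \otimes g) + \tfrac{n^3 - n}{12}(c \otimes g)$, traced on $V_\mu$ and combined with the cyclicity of trace across adjacent weight spaces, produces the identity
\[
  R_\mu(n, g) - R_{\mu + n}(n, g) = \left( -2n\phi_0(g) + \tfrac{n^3 - n}{12}\chi(g) \right) \dim V_\mu, \quad R_\mu(n, g) := \mathrm{tr}_{V_\mu}\bigl[(d_n)(d_{-n} \otimes g)\bigr].
\]
Summing telescopically over a growing interval of weights, the left-hand side collapses to a difference of only $2n$ trace terms --- bounded uniformly in the range by the polynomial-in-$\mu$ structure of operators on uniformly bounded $\Vir$-weight modules --- while the right-hand side grows unboundedly since $V$ is infinite-dimensional.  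Hence the coefficient $-2n\phi_0(g) + \tfrac{n^3 - n}{12}\chi(g)$ must vanish for every $n \neq 0$; taking $n = 1$ and $n = 2$ yields $\phi_0 \equiv 0$ and $\chi|_J \equiv 0$.

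With $\phi_0 = 0$, the operators $\{d_0 \otimes g : g \in J\}$ form a commuting family of nilpotents on each finite-dimensional $V_\mu$, so by Engel's theorem their common kernel $K = \bigoplus_\mu K_\mu \subseteq V$ is nonzero in every nonzero weight space.  Since $d_0 \otimes g$ commutes with all of $\fa$, the subspace $K$ is $\fa$-stable; a further analysis of the $\fa$-action on $K$ --- exploiting $\chi|_J = 0$ together with the Virasoro identities rewriting $d_k \otimes g$ as commutators of elements of $\Vir$ with elements of $\fa$ --- will locate a nonzero vector annihilated by all of $\fa$, contradicting the assumption $V^{\fa} = 0$.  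The principal obstacle is the uniform trace bound in the telescoping step, which relies on the deeper result --- familiar from Mathieu's theory of coherent families --- that operators on uniformly bounded $\Vir$-weight modules depend polynomially on the weight.
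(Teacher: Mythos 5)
Your reduction steps are fine (that $\fa V$ and $V^{\fa}$ are submodules, the disposal of the finite-dimensional case, and the stability of the joint generalized eigenspace $V^{\phi_0}$ under all of $\cV$ — though for the latter you need the observation $\ad(d_0 \otimes g)^2 = 0$ on $\cV$, not just commutation with $\fa$), and this eigenspace reduction is a reasonable alternative to the paper's filtration argument for its Step 1. The first genuine gap is your telescoping trace argument, which plays the role of the paper's Step 2 (showing $\phi_0 = 0$ and $\chi|_J = 0$). The identity $R_\mu(n,g) - R_{\mu+n}(n,g) = \bigl(-2n\phi_0(g) + \tfrac{n^3-n}{12}\chi(g)\bigr)\dim V_\mu$ is correct, but after summing, the left-hand side is $R_\alpha(n,g) - R_{\alpha+Kn}(n,g)$, and uniform boundedness controls only $\dim V_\mu$, not the matrix entries of $d_n$ and $d_{-n}\otimes g$; on intermediate-series-type actions such traces grow with $\mu$. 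You concede this and appeal to an unproved "polynomial in the weight" principle, but (i) no such result is established for $\cV$-modules (Mathieu's coherent-family technology is for $\Vir$ and does not transfer for free), and (ii) even granting polynomial dependence, the conclusion does not follow: the relation $R_\mu - R_{\mu+n} = C\,\dim V_\mu$ with $\dim V_\mu$ bounded is perfectly consistent with $C \neq 0$ and $R_\mu$ growing linearly in $\mu$, so no contradiction with infinite-dimensionality arises. The paper avoids traces altogether: from the nilpotency of $(d_0\otimes f) - a$ it deduces $(d_j\otimes f)^N V = 0$, then takes a minimal $m$ with $(d_j\otimes f)^m V = 0$ and computes $[d_{-j},(d_j\otimes f)^m]V$, using that $2j\,d_0\otimes f - \tfrac{j^3-j}{12}\,c\otimes f$ acts invertibly on the relevant generalized eigenspaces when $2ja - \tfrac{j^3-j}{12}c' \neq 0$; some argument of that kind, not a trace estimate, is needed here.

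The second gap is the endgame. Even granting $\phi_0 = 0$ and $\chi|_J = 0$, your final paragraph only announces that "a further analysis will locate a nonzero vector annihilated by all of $\fa$"; that is exactly the hard part, since the operators $d_k \otimes g$ with $k \neq 0$ move between weight spaces, so Engel-type linear algebra on a single finite-dimensional $V_\mu$ (which gives you the kernel $K$ of the $d_0\otimes g$) says nothing about them. The paper's Steps 3 and 4 do precisely this work: an induction proving $(d_{i_1}\otimes f)\cdots(d_{i_N}\otimes f)V = 0$ for each fixed $f \in J$, and then — using the hypothesis that $A$ is finite-dimensional, which your sketch never invokes — the bound $M = (\dim A)(N-1)+1$ to conclude $(\Vir\otimes J)^M V = 0$, so that $U(\cV)(\Vir\otimes J)U(\cV)$ acts nilpotently and therefore annihilates the irreducible module $V$. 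Without a substitute for these two steps, the proposal does not prove the proposition.
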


\begin{proof}
  We may assume that $V$ is nontrivial since otherwise the statement is clear.  Let $J$ be an ideal of $A$ such that $J^2=0$.  We have a weight decomposition $V = \bigoplus_{i \in \Z} V_{\alpha + i}$ for some $\alpha \in \C$.  Fix $i \in \Z$ such that $V_{\alpha+i} \ne 0$ and let $f \in J$.  Since the operator $d_0 \otimes f$ fixes the finite-dimensional vector space $V_{\alpha+i}$, it has an eigenvector.  In other words, there exists a nonzero $v \in V_{\alpha+i}$ and $a \in \C$ such that $(d_0 \otimes f) v = av$.  We split the proof into the following steps:
  \begin{description}
    \item[Step 1:] Show that $(d_0 \otimes f)-a$ acts nilpotently on $V$.
    \item[Step 2:] Show that $a=0$ and $c \otimes J$ acts by zero on $V$.
    \item[Step 3:] Show that $(d_{i_1} \otimes f) \cdots (d_{i_N} \otimes f) V = 0$ for all $i_1,\dots,i_N \in \Z,\ f \in J$.
    \item[Step 4:] Show that $(\Vir \otimes J)V=0$.
  \end{description}

  \emph{Step 1:} We first show that $(d_0 \otimes f) -a$ acts locally nilpotently on $V$.  Pick a vector space complement $B$ to $J$ in $A$.  So $A = B \oplus J$ as vector spaces.  Then we have the vector space decomposition $\cV = (\Vir' \otimes B) \oplus (\C c \otimes B) \oplus (\Vir \otimes J)$, where $\Vir' := \bigoplus_{m \in \Z} d_m$.  We therefore have, by the PBW Theorem,
  \[
    U(\Vir \otimes A) \cong U(\Vir' \otimes B) \otimes U((\C c \otimes B) \oplus (\Vir \otimes J)).
  \]
  Note that since $J^2=0$ and $c$ is central in $\Vir$, $\tilde U := U((\C c \otimes B) \oplus (\Vir \otimes J))$ is a commutative associative algebra.  Since $V$ is irreducible, we have $V = U(\cV) v$.  Thus our claim is equivalent to proving that $((d_0 \otimes f) - a)^{n+1}$ acts by zero on $U_n(\Vir' \otimes B) \tilde U v$ for all $n \in \N$.  We prove this by induction.  The case $n=0$ follows immediately from the commutativity of $\tilde U$ and the fact that $(d_0 \otimes f) - a$ annihilates $v$.  Now consider $n \ge 1$.  For $s \le n$, $u_1, \dots, u_s \in \Vir' \otimes B$, and $u \in \tilde U$, we have
  \begin{align*}
    ((d_0 \otimes f) - a)^{n+1} u_1 \cdots u_s u  v
    &= ((d_0 \otimes f) -a)^n [(d_0 \otimes f) - a, u_1 \cdots u_s u] v \\
    &= ((d_0 \otimes f) -a)^n [(d_0 \otimes f), u_1 \cdots u_s u] v.
  \end{align*}
  By Lemma~\ref{lem:ab-ideal-commutation},
  \[
    [(d_0 \otimes f), u_1 \cdots u_s u] \in U_{s-1}(\Vir' \otimes B) \tilde U \subseteq U_{n-1}(\Vir' \otimes B) \tilde U,
  \]
  and so $((d_0 \otimes f) -a)^n [(d_0 \otimes f), u_1 \cdots u_s u] v = 0$ by the induction hypothesis. This completes the proof that $(d_0 \otimes f) - a$ acts locally nilpotently on $V$.

  Since $V$ is uniformly bounded, we can choose $N \in \N$ such that $\dim V_{\alpha + i} \le N$ for all $i \in \Z$. Thus $(d_0 \otimes f) - a$ acts nilpotently on $V_{\alpha + i}$ and, in fact, $((d_0 \otimes f)-a)^N V_{\alpha + i} = 0$ for all $i \in \Z$.  Therefore
  \begin{equation} \label{eq:d0f-a-nilpotence}
    ((d_0 \otimes f)-a)^N V = 0.
  \end{equation}
  Thus $(d_0 \otimes f) - a$ acts nilpotently on $V$.

  \medskip

  \emph{Step 2:} Since $\Vir \otimes J$ is abelian, we have
  \[
    [d_j, ((d_0 \otimes f)-a)^m] = m((d_0 \otimes f)-a)^{m-1}[d_j,(d_0 \otimes f)-a] \quad \forall\ m \in \N_+,
  \]
  and so
  \begin{equation} \label{eq:dj-d0-identity}
    [d_j, ((d_0 \otimes f)-a)^m] = -jm(d_j \otimes f)((d_0 \otimes f)-a)^{m-1} \quad \forall\ m \in \N_+.
  \end{equation}

  From~\eqref{eq:d0f-a-nilpotence} and~\eqref{eq:dj-d0-identity}, it follows by an easy induction that
  \begin{equation} \label{eq:dj-d0-kill-V}
    (d_j \otimes f)^r (d_0 \otimes f - a)^{N-r} V = 0 \quad \text{for all } j \in \Z \setminus \{0\},\ 0 \le r \le N.
  \end{equation}

  Since $c \otimes f$ is central, it acts by some scalar $c' \in \C$ on the irreducible module $V$ by Schur's Lemma.  We want to show that $a=c'=0$.  Suppose, on the contrary, that $a \ne 0$ or $c' \ne 0$.  Then we can choose $j \in \Z \setminus \{0\}$ such that
  \begin{equation} \label{eq:nonzero-choices}
    2ja - \frac{j^3-j}{12}c' \ne 0.
  \end{equation}

  Taking $r=N$ in \eqref{eq:dj-d0-kill-V}, we see that $(d_j \otimes f)^N V=0$.  Let $m$ be the minimal element of $\N$ such that $(d_j \otimes f)^m V = 0$ (so, clearly, $1
  \le m \le N$).  Since $\Vir \otimes J$ is abelian, we have
  \begin{align*}
    0 &= [d_{-j}, (d_j \otimes f)^m] V \\
    &= m(d_j \otimes f)^{m-1} [d_{-j}, d_j \otimes f] V \\
    &= m(d_j \otimes f)^{m-1} \left( 2j d_0 \otimes f - \frac{j^3-j}{12} c \otimes f \right) V.
  \end{align*}
  For each $i \in \Z$, by~\eqref{eq:nonzero-choices}, $(2jd_0 \otimes f - \frac{j^3-j}{12} c \otimes f)$ acts invertibly on the generalized $(d_0 \otimes f)$-eigenspace of $V_{\alpha+i}$ corresponding to the eigenvalue $a$.  Thus, we see from the above that $(d_j \otimes f)^{m-1}$ acts by zero on such generalized eigenspaces.  On the other hand, we have from~\eqref{eq:dj-d0-kill-V} that
  \[
    (d_j \otimes f)^{m-1} ((d_0 \otimes f)-a)^{N-m+1}V = 0,
  \]
  which implies that $(d_j \otimes f)^{m-1}$ also acts by zero on all the generalized eigenspaces of $V_{\alpha+i}$, $i \in \Z$, corresponding to any eigenvalue not equal to $a$.  It follows that $(d_j \otimes V)^{m-1} V = 0$, contradicting the choice of $m$.  Therefore $a=c'=0$.

  \medskip

  \emph{Step 3:} Since the above arguments hold for arbitrary $f \in J$, we have ${(d_0 \otimes J)^N V} \linebreak =0$.  We show that
  \begin{equation} \label{eq:Vir-J-nilpotent-elements}
    (d_{i_1} \otimes f) \cdots (d_{i_r} \otimes f) (d_0 \otimes f)^{N-r} V = 0 \quad \forall\ 0 \le r \le N,\ i_1, \dots, i_r \in \Z \setminus \{0\},\ f \in J.
  \end{equation}
  We have already proved the base case $r=0$.  Now assume the result holds for some $0 \le r < N$.  Then, for $i_1,\dots,i_{r+1} \in \Z \setminus \{0\}$, $f \in J$,
  \begin{align*}
    0 &= (d_{i_1} \otimes f) \cdots (d_{i_r} \otimes f) (d_0 \otimes f)^{N-r} V \\
    &= d_{i_{r+1}} (d_{i_1} \otimes f) \cdots (d_{i_r} \otimes f) (d_0 \otimes f)^{N-r} V \\
    &= \sum_{k=1}^r [d_{i_{r+1}}, d_{i_k} \otimes f] (d_{i_1} \otimes f) \cdots (d_{i_{k-1}} \otimes f)(d_{i_{k+1}} \otimes f) \cdots (d_{i_r} \otimes f) (d_0 \otimes f)^{N-r} V \\
    &\qquad + (N-r) [d_{i_{r+1}}, d_0 \otimes f] (d_{i_1} \otimes f) \cdots (d_{i_r} \otimes f) (d_0 \otimes f)^{N-r-1} V \\
    &\qquad + (d_{i_1} \otimes f) \cdots (d_{i_r} \otimes f) (d_0 \otimes f)^{N-r} d_{i_{r+1}} V \\
    &= (i_k-i_{r+1}) \sum_{k=1}^r (d_{i_{r+1} + i_k} \otimes f) (d_{i_1} \otimes f) \\
    &\qquad \qquad \qquad \qquad \qquad \cdots (d_{i_{k-1}} \otimes f)(d_{i_{k+1}} \otimes f) \cdots (d_{i_r} \otimes f) (d_0 \otimes f)^{N-r} V \\
    &\qquad -i_{r+1}(N-r) (d_{i_{r+1}} \otimes f) (d_{i_1} \otimes f) \cdots (d_{i_r} \otimes f) (d_0 \otimes f)^{N-r-1} V \\
    &= -i_{r+1}(N-r) (d_{i_1} \otimes f) \cdots (d_{i_{r+1}} \otimes f) (d_0 \otimes f)^{N-r-1} V,
  \end{align*}
  where in the fourth equality we have used the fact that $c \otimes J$ acts by zero on $V$.  This completes the inductive step.  Now,~\eqref{eq:Vir-J-nilpotent-elements} immediately implies that
  \begin{equation} \label{eq:di-fixed-f-nilpotence}
    (d_{i_1} \otimes f) \cdots (d_{i_N} \otimes f) V = 0 \quad \text{for all } i_1,\dots,i_N \in \Z,\ f \in J.
  \end{equation}

  \medskip

  \emph{Step 4:} By assumption, $A$ is finite-dimensional.  Let $M = (\dim A)(N-1)+1$.  By expanding in a basis for $A$ and using~\eqref{eq:di-fixed-f-nilpotence}, we see that
  \[
    (d_{i_1} \otimes f_1) \cdots (d_{i_M} \otimes f_M) V = 0 \quad \text{for all } i_1,\dots, i_M \in \Z,\ f_1,\dots,f_M \in J.
  \]
  In other words, $(\Vir \otimes J)^M V = 0$, where the $M$-th power here is interpreted as taking place inside $U(\Vir \otimes J)$.  Thus $U(\cV) (\Vir \otimes J)^M U(\cV) V = 0$.  We claim that
  \begin{equation} \label{eq:universal-M-power}
    \big( U(\cV) (\Vir \otimes J) U(\cV) \big)^M = U(\cV) (\Vir \otimes J)^M U(\cV).
  \end{equation}
  Since $1 \in U(\cV)$, it is clear that the right-hand side of~\eqref{eq:universal-M-power} is contained in the left-hand side.  To prove the reverse inclusion, one expands the left-hand side and uses the Lie bracket (and the fact that $\Vir \otimes J$ is an ideal of $\cV$) to move the elements of $U(\cV)$ to the right or left.  Thus $\big( U(\cV) (\Vir \otimes J) U(\cV) \big)^M V=0$.  This implies that $\big( U(\cV) (\Vir \otimes J) U(\cV) \big) V \ne V$.  Since $V$ is irreducible and $\big( U(\cV) (\Vir \otimes J) U(\cV) \big) V$ is a submodule of $V$, this implies that $\big( U(\cV) (\Vir \otimes J) U(\cV) \big) V = 0$, which in turn implies that $(\Vir \otimes J) V = 0$ as desired.\qed
\end{proof}

\begin{corollary} \label{cor:nilp-ideals-act-by-zero}
  Suppose $V$ is a uniformly bounded irreducible $\cV$-module, with $A$ finite-dimensional.  Then $(\Vir \otimes J) V=0$ for any nilpotent ideal $J$ of $A$.
\end{corollary}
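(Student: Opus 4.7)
The plan is to reduce to the case $J^2 = 0$, already treated in Proposition~\ref{prop:J2=0-kills-V}, by a short induction on the nilpotency index $n$ of $J$ (that is, the least $n \in \N$ with $J^n = 0$). If $n \le 1$ then $J = 0$ and there is nothing to prove, so I will assume $n \ge 2$ and that the corollary already holds for nilpotent ideals of smaller index.

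The key observation is that the ideal $K := J^{n-1}$ satisfies $K^2 = J^{2n-2} \subseteq J^n = 0$, since $2n - 2 \ge n$ whenever $n \ge 2$. Proposition~\ref{prop:J2=0-kills-V} therefore applies to $K$ and yields $(\Vir \otimes K) V = 0$. Consequently the $\cV$-action on $V$ factors through the quotient map $\cV \twoheadrightarrow \cV / (\Vir \otimes K) \cong \Vir \otimes (A/K)$, so $V$ is naturally an irreducible, uniformly bounded module for the map Virasoro algebra $\Vir \otimes (A/K)$ (its weight-space decomposition and weight-space dimensions are unchanged), and $A/K$ remains a finite-dimensional commutative unital $\C$-algebra.

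To conclude, note that the image $\bar J$ of $J$ in $A/K$ is a nilpotent ideal with $\bar J^{n-1} = J^{n-1}/J^{n-1} = 0$, so its nilpotency index is strictly less than $n$. Applying the inductive hypothesis to $V$ as a module over $\Vir \otimes (A/K)$ with the ideal $\bar J \trianglelefteq A/K$ gives $(\Vir \otimes \bar J) V = 0$, which pulls back along the quotient map to $(\Vir \otimes J) V = 0$, as required. No genuine obstacle arises at this stage: all of the technical difficulty was absorbed in the proof of Proposition~\ref{prop:J2=0-kills-V}, and what remains is just the observation that powers of a nilpotent ideal stratify the problem into finitely many layers, each of which is a square-zero situation modulo the previous one.
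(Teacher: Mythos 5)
Your proof is correct and is essentially the paper's argument: both reduce to Proposition~\ref{prop:J2=0-kills-V} by peeling off one power of $J$ at a time, using that $J^{n-1}$ (or its image in the relevant quotient) squares to zero because $2n-2 \ge n$. The only difference is cosmetic — you run an induction on the nilpotency index and quotient by $J^{n-1}$, while the paper takes the minimal $n$ with $(\Vir \otimes J^n)V = 0$, quotients by $J^n$, and derives a contradiction.
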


\begin{proof}
  We may assume that $V$ is nontrivial since otherwise the statement is clear.  Let $J$ be a nilpotent ideal of $A$, so that $J^r = 0$ for some $r \in \N_+$.  Choose the minimal $n \in \N_+$ with the property that $(\Vir \otimes J^n) V = 0$.  Suppose $n > 1$.  The action of $\cV$ factors through $\cV/(\Vir \otimes J^n) \cong \Vir \otimes (J/J^n)$, and so we can consider $V$ as a module for this quotient.  Then, by Proposition~\ref{prop:J2=0-kills-V}, we have that $(\Vir \otimes (J^{n-1}/J^n)) V = 0$.  This implies $(\Vir \otimes J^{n-1}) V = 0$, contradicting the choice of $n$.  It follows that $n=1$ and so $(\Vir \otimes J) V = 0$.\qed
\end{proof}

\begin{theorem} \label{thm:UBM-classification}
  Any uniformly bounded irreducible $\cV$-module is a single point evaluation module $\ev_\fm V$ for some maximal ideal $\fm \trianglelefteq A$ and $\Vir$-module $V$ of the intermediate series.
\end{theorem}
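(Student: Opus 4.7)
The proof should be essentially a matter of assembling the three substantial results that have already been proved in this section together with the $\Vir$-classification from Section~\ref{sec:Virasoro}. The plan is as follows.

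First, I would reduce to a purely algebraic statement about a single local factor. By Proposition~\ref{prop:UBM-have-single-point-support}, there exists an ideal $J \trianglelefteq A$ of finite codimension, supported at a single point, such that $(\Vir \otimes J) V = 0$. Let $\fm = \rad J$; then $A/J$ is a finite-dimensional local $\C$-algebra with maximal ideal $\fm/J$, and the action of $\cV$ on $V$ factors through
\[
  \cV/(\Vir \otimes J) \cong \Vir \otimes (A/J).
\]
Throughout this reduction I would note that $V$ remains irreducible and uniformly bounded as a module over this finite-dimensional quotient.

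Next, I would kill the nilpotent part. Since $A/J$ is a finite-dimensional local algebra, its maximal ideal $\fm/J$ is nilpotent. Applying Corollary~\ref{cor:nilp-ideals-act-by-zero} to the $\bigl(\Vir \otimes (A/J)\bigr)$-module $V$ (which is uniformly bounded, irreducible, and has finite-dimensional coefficient algebra), I obtain $\bigl(\Vir \otimes (\fm/J)\bigr) V = 0$. Consequently the action factors still further through
\[
  \Vir \otimes (A/\fm) \cong \Vir,
\]
which precisely says that $V \cong \ev_\fm W$ for some $\Vir$-module $W$.

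Finally, I would identify $W$. The module $W$ inherits the properties of $V$: it is irreducible and uniformly bounded as a $\Vir$-module. By Corollary~\ref{cor:Vir-UBM=int-series}, any nontrivial uniformly bounded irreducible quasifinite $\Vir$-module is of the intermediate series, which yields the conclusion. (The trivial module, if it is to be included among the uniformly bounded irreducibles at all, is naturally viewed as a degenerate member of the intermediate series via $V(a,b)/\C$, and I would dispose of this borderline case at the start of the argument.)

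I do not expect any significant obstacle: the heavy lifting has already been done in Propositions~\ref{prop:irred-killed-by-finite-codim-ideal}, \ref{prop:UBM-have-single-point-support}, and~\ref{prop:J2=0-kills-V}, together with Corollary~\ref{cor:nilp-ideals-act-by-zero}. The only subtle point is to verify that at each reduction step the module $V$, viewed over the relevant quotient of $\cV$, still satisfies the finite-dimensionality hypothesis on $A$ needed to apply Corollary~\ref{cor:nilp-ideals-act-by-zero}; this is immediate because $A/J$ is finite-dimensional by construction.
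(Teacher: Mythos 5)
Your proposal is correct and follows essentially the same route as the paper: reduce via Proposition~\ref{prop:UBM-have-single-point-support} to the finite-dimensional quotient $\Vir \otimes (A/J)$, observe that $\fm/J$ is nilpotent (the paper gets $\fm^r \subseteq J$ from the Noetherian fact that every ideal contains a power of its radical, which is the same point as your local-Artinian observation), apply Corollary~\ref{cor:nilp-ideals-act-by-zero} to conclude $(\Vir \otimes \fm)V = 0$, and finish with Corollary~\ref{cor:Vir-UBM=int-series}. The only difference is that you spell out the final identification of the $\Vir$-module and the trivial-module caveat, which the paper leaves implicit.
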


\begin{proof}
  It suffices to show that $V$ is annihilated by $\Vir \otimes \fm$ for some maximal ideal $\fm \trianglelefteq A$.  By Proposition~\ref{prop:UBM-have-single-point-support}, there exists an ideal $J \trianglelefteq A$ of finite codimension, with $\fm := \rad J$ a maximal ideal, such that $(\Vir \otimes J) V = 0$.  We can consider $V$ as a module for $(\Vir \otimes A)/(\Vir \otimes J) \cong \Vir \otimes (A/J)$, where the algebra $A/J$ is finite-dimensional.  Since every ideal in a Noetherian ring contains a power of its radical (see, for example, \cite[Prop.~7.14]{AM69}), we have $\fm^r \subseteq J$ for some $r \in \N_+$.  Then $(\fm/J)^r = 0$ in $A/J$, and it follows from Corollary~\ref{cor:nilp-ideals-act-by-zero} that $(\Vir \otimes \fm) V = 0$.\qed
\end{proof}

%
\section{Highest weight modules} \label{sec:hw-modules}
%

In this section we give a classification of the irreducible highest weight quasifinite $\cV$-modules.  We show that they are all tensor products of generalized single point evaluation modules.  In the case $A = \C[t,t^{-1}]$, this was proved in \cite[Theorem~6.4]{GLZ08}.

\begin{proposition} \label{prop:hw-HC-finite-support}
  The irreducible highest weight module $V(\varphi)$, $\varphi \in \hom_\C(\cV_0,\C)$, is a quasifinite module if and only if there exists an ideal $J \trianglelefteq A$ of finite codimension such that $\varphi(\Vir_0 \otimes J) = 0$ and, in this case, $(\Vir \otimes J)V(\varphi)=0$.  In particular, an irreducible highest weight module is a quasifinite module if and only if it has finite support.
\end{proposition}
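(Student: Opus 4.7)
The plan has three parts: sufficiency (which will also establish $(\Vir \otimes J) V(\varphi) = 0$ and quasifiniteness), necessity, and the support consequence.

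For sufficiency, suppose $J \trianglelefteq A$ has finite codimension and $\varphi(\Vir_0 \otimes J) = 0$. The central claim is that $(\Vir \otimes J) v_\varphi = 0$ in $V(\varphi)$, which I will prove by comparing with a smaller Verma module. Since $\varphi$ vanishes on $\Vir_0 \otimes J$, it descends to a character $\bar\varphi$ of $(\Vir \otimes (A/J))_0$, and the corresponding Verma module $M_{A/J}(\bar\varphi)$, pulled back along $\cV \twoheadrightarrow \Vir \otimes (A/J)$, becomes a nonzero highest weight $\cV$-module of highest weight $\varphi(d_0)$. The universal property of $M(\varphi)$ then yields a surjection $M(\varphi) \twoheadrightarrow M_{A/J}(\bar\varphi)$ whose kernel is a proper submodule containing $U(\cV)(\Vir \otimes J)\tilde v_\varphi$, hence contained in $N(\varphi)$; therefore $(\Vir \otimes J) v_\varphi = 0$ in $V(\varphi)$. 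Because $\Vir \otimes J$ is an ideal of $\cV$, an easy induction (moving bracket terms to the right) yields $(\Vir \otimes J) U(\cV) \subseteq U(\cV)(\Vir \otimes J)$, which upgrades the vanishing to $(\Vir \otimes J) V(\varphi) = (\Vir \otimes J) U(\cV) v_\varphi \subseteq U(\cV)(\Vir \otimes J) v_\varphi = 0$. Since $V(\varphi)$ then factors through $\Vir \otimes (A/J)$ with $A/J$ finite-dimensional, each weight space $V(\varphi)_{\varphi(d_0) - n}$ is spanned by the finitely many PBW monomials in the elements $d_{-i} \otimes \bar a_j$ of total weight $-n$, proving quasifiniteness.

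For necessity, assume $V(\varphi)$ is quasifinite. For each $n \in \N_+$ the linear map $A \to V(\varphi)_{\varphi(d_0)-n}$, $a \mapsto (d_{-n} \otimes a) v_\varphi$, has finite-dimensional target, so its kernel $K_n$ has finite codimension in $A$. For $a \in K_n$ and $b \in A$, combining $(d_n \otimes b)(d_{-n} \otimes a) v_\varphi = 0$ with $(d_{-n} \otimes a)(d_n \otimes b) v_\varphi = 0$ (the latter by the highest weight property) gives the scalar relation
\[
  -2n\, \varphi(d_0 \otimes ab) + \tfrac{n^3-n}{12}\, \varphi(c \otimes ab) = 0.
\]
The specializations $n = 1$ and $n = 2$ form a $2 \times 2$ system of nonzero determinant, forcing $\varphi(d_0 \otimes ab) = \varphi(c \otimes ab) = 0$ for all $a \in K_1 \cap K_2$, $b \in A$. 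The set $J := \{a \in A \mid \varphi(d_0 \otimes aA) = \varphi(c \otimes aA) = 0\}$ is then an ideal of $A$ containing $K_1 \cap K_2$ (so of finite codimension), and since $1 \in A$, $\varphi(\Vir_0 \otimes J) = 0$.

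For the support characterization, quasifiniteness produces an ideal $J$ with $(\Vir \otimes J) V(\varphi) = 0$, so $\Ann_A V(\varphi) \supseteq J$ has finite codimension and $V(\varphi)$ has finite support. Conversely, if $V(\varphi)$ has finite support, then only finitely many maximal ideals of the Jacobson ring $A$ contain $\Ann_A V(\varphi)$; every prime of $A/\Ann_A V(\varphi)$ is then an intersection of those finitely many maximals and so must equal one of them, making $A/\Ann_A V(\varphi)$ of Krull dimension zero, hence Artinian (by Noetherianness) and finally finite-dimensional over $\C$ (by Nullstellensatz for finitely generated $\C$-algebras). Since $\Ann_A V(\varphi)$ kills $v_\varphi$, it automatically satisfies $\varphi(\Vir_0 \otimes \Ann_A V(\varphi)) = 0$, and sufficiency delivers quasifiniteness. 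I expect the main obstacle to be verifying that $U(\cV)(\Vir \otimes J) \tilde v_\varphi$ is \emph{proper} inside $M(\varphi)$: this is precisely where the auxiliary Verma module $M_{A/J}(\bar\varphi)$ and the hypothesis $\varphi(\Vir_0 \otimes J) = 0$ are essential, and without this one cannot conclude that $(\Vir \otimes J) v_\varphi$ lies in $N(\varphi)$.
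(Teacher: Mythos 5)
Your proposal is correct, and its key step differs from the paper's. For sufficiency, the paper proves $(\Vir \otimes J)v_\varphi = 0$ directly inside $V(\varphi)$ by a downward induction on $n$ in $(d_n \otimes J)v_\varphi$: assuming the vanishing for all higher indices, it checks that $d_1 \otimes g$ and $d_2 \otimes g$ (which generate $\cV_+$) kill $(d_k \otimes f)v_\varphi$, so a nonzero such vector would be a second highest weight vector, contradicting irreducibility. You instead observe that $\varphi$ descends to a character $\bar\varphi$ of $(\Vir \otimes (A/J))_0$ and use the universal property of $M(\varphi)$ to produce a surjection onto the pulled-back Verma module $M_{A/J}(\bar\varphi)$, whose kernel is proper (Verma modules are nonzero), contains $U(\cV)(\Vir \otimes J)\tilde v_\varphi$, and hence lies in $N(\varphi)$; this is a clean structural argument that avoids the induction and isolates exactly where the hypothesis $\varphi(\Vir_0 \otimes J)=0$ is used, at the cost of invoking the comparison Verma module. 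The upgrade from $(\Vir \otimes J)v_\varphi=0$ to $(\Vir \otimes J)V(\varphi)=0$ and the finiteness of weight spaces via $A/J$ finite-dimensional then proceed as in the paper. Your necessity argument is a minor variant of the paper's: the paper uses only the kernel of $f \mapsto (d_{-2}\otimes f)v_\varphi$, shows it is an ideal via $[d_0 \otimes g, d_{-2}\otimes f]$, and extracts $\varphi(d_0\otimes f)=\varphi(c\otimes f)=0$ by applying $d_2\otimes 1$ and $(d_1\otimes 1)^2$, whereas you intersect the kernels at degrees $-1$ and $-2$ and solve a $2\times 2$ system before saturating to an ideal $J$; both are equally valid, and the paper's version gets the ideal property for free from a single kernel. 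Finally, you spell out the finite-support equivalence (Jacobson ring, Krull dimension zero, Artinian, finite-dimensional) in more detail than the paper does, which is a welcome addition rather than a discrepancy.
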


\begin{proof}
  Let $J$ denote the kernel of the linear map
  \[
    A \to V(\varphi)_{\varphi(d_0)-2},\quad f \mapsto (d_{-2} \otimes f) v_\varphi,\ f \in A.
  \]
  We claim $J$ is an ideal of $A$.  Clearly $J$ is a linear subspace of $A$.  For $f \in J$, $g \in A$, we have
  \[
    0 = [d_0 \otimes g, d_{-2} \otimes f] v_\varphi = -2(d_{-2} \otimes gf) v_\varphi,
  \]
  which implies $gf \in J$.  In the above, we have used the fact that $d_0 \otimes g$ preserves the weight space $V_{\varphi(d_0)}$, which is spanned by $v_\varphi$.  Next we claim that $\varphi(\Vir_0 \otimes J) = 0$.  Fix $f \in J$.  Then
  \[
    0 = (d_2 \otimes 1)(d_{-2} \otimes f) v_\varphi = [d_2 \otimes 1, d_{-2} \otimes f] v_\varphi = \left( \left(-4d_0 + \frac{1}{2}c \right) \otimes f \right) v_\varphi,
  \]
  and
  \begin{align*}
    0 &= (d_1 \otimes 1)(d_1 \otimes 1)(d_{-2} \otimes f) v_\varphi = (d_1 \otimes 1)[d_1 \otimes 1, d_{-2} \otimes f] v_\varphi \\
    &= -3(d_1 \otimes g)(d_{-1} \otimes f) v_\varphi = -3[d_1 \otimes 1, d_{-1} \otimes f] = 6(d_0 \otimes f) v_\varphi.
  \end{align*}
  Thus $\varphi(d_0 \otimes f) v_\varphi = (d_0 \otimes f) v_\varphi = 0$ and $\varphi(c \otimes f) v_\varphi = (c \otimes f) v_\varphi = 0$ for all $f \in J$, proving our claim.  If $V(\varphi)$ is a quasifinite module, the weight space $V(\varphi)_{\varphi(d_0)-2}$ is finite-dimensional, and so $J$ has finite codimension in $A$.  This completes the proof of the reverse implication asserted in the proposition.

  Now assume that there exists an ideal $J \trianglelefteq A$ of finite codimension such that $\varphi(\Vir_0 \otimes J) = 0$.  We first show that $(\Vir \otimes J) v_\varphi=0$.  It suffices to show that $(d_n \otimes J) v_\varphi$ for all $n \in \Z$, which we show by induction.  The result holds by definition of $V(\varphi)$ for $n > 0$ and by the assumption on $J$ for $n=0$.  Now assume the result holds for all $n > k$ for some $k \in \Z$. Then for all $f \in J$ and $g \in A$, we have
  \begin{gather*}
    (d_1 \otimes g)(d_k \otimes f) v_\varphi = [d_1 \otimes g, d_k \otimes f] v_\varphi = (k-1)(d_{k+1} \otimes gf) v_\varphi = 0, \\
    (d_2 \otimes g)(d_k \otimes f) v_\varphi = [d_2 \otimes g, d_k \otimes f] v_\varphi = (k-2)\left( \left(d_{k+2} + \delta_{k,-2} \tfrac{1}{2} c \right) \otimes gf \right) v_\varphi = 0.
  \end{gather*}
  Suppose $(d_k \otimes f) v_\varphi \ne 0$.  Since elements of the form $d_1 \otimes g$, $d_2 \otimes g$, $g \in A$, generate $\cV_+$, this would imply that $(d_k \otimes f) v_\varphi$ is a highest weight vector, contradicting the irreducibility of $V(\varphi)$.  Therefore $(d_k \otimes f)v_\varphi=0$, completing the inductive step.

  Now, since $\Vir \otimes J$ is an ideal of $\cV$, the set $W$ of all elements of $V(\varphi)$ annihilated by $\Vir \otimes J$ is a $\cV$-submodule of $V(\varphi)$.  Since $W$ contains $v_\varphi$ by the above, it is nonzero.  Therefore, since $V(\varphi)$ is irreducible, $W=V(\varphi)$.  In other words, $(\Vir \otimes J)V(\varphi)=0$.

  It follows from the above that $V(\varphi)$ can be considered as a module over $\cV/(\Vir \otimes J) \cong \Vir \otimes (A/J)$ and that $V(\varphi) = U(\Vir_- \otimes (A/J)) v_\varphi$.  Since $J$ has finite codimension in $A$, the weight spaces of $U(\Vir_- \otimes (A/J))$ are finite-dimensional.  Hence the same property holds for $V(\varphi)$, which is thus a quasifinite module.\qed
\end{proof}

\begin{corollary}
  If $A$ is finite-dimensional, then all highest or lowest weight $\cV$-modules are quasifinite modules.
\end{corollary}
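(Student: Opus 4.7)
The plan is to reduce the statement to showing that every Verma module $M(\varphi)$ is quasifinite when $\dim A < \infty$. First, by the involution $d_n \mapsto -d_{-n}$, $c \mapsto -c$ discussed in the earlier remark, it suffices to handle highest weight modules. Any highest weight $\cV$-module $V$ with highest weight vector $v$ and highest weight $\varphi$ is a quotient of $M(\varphi)$ via the canonical surjection $\tilde v_\varphi \mapsto v$, and this surjection respects the $d_0$-weight decomposition. Hence $\dim V_\lambda \le \dim M(\varphi)_\lambda$ for every $\lambda \in \C$, so it is enough to check that the weight spaces of $M(\varphi)$ are finite-dimensional. (Note that if one only wanted the irreducible case, Proposition~\ref{prop:hw-HC-finite-support} applied with $J = 0$ would suffice, but the statement as written covers arbitrary highest/lowest weight modules, so passing through Verma modules is necessary.)

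For this I would use the identification $M(\varphi) \cong U(\cV_-)$ as $U(\cV_-)$-modules, which turns the weight space $M(\varphi)_{\varphi(d_0)-i}$ into the $d_0$-degree $-i$ piece of $U(\cV_-)$. The key structural observation is that each Lie subalgebra $d_{-j} \otimes A$ (for $j \ge 1$) is abelian, because $[d_{-j},d_{-j}] = 0$, and has dimension $\dim A < \infty$. Applying PBW to the vector space decomposition $\cV_- = \bigoplus_{j \ge 1}(d_{-j} \otimes A)$ then yields a $d_0$-graded vector space isomorphism
\[
  U(\cV_-) \;\cong\; \bigotimes_{j \ge 1} \Sym(d_{-j} \otimes A),
\]
where $\Sym^{m}(d_{-j} \otimes A)$ sits in $d_0$-degree $-jm$.

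The degree-$(-i)$ subspace thus decomposes as a finite direct sum, indexed by partitions $i = \sum_j j\, m_j$, of tensor products $\bigotimes_j \Sym^{m_j}(d_{-j} \otimes A)$. Each such summand has finite dimension $\prod_j \binom{\dim A + m_j - 1}{m_j}$, and only finitely many partitions of $i$ contribute, so the total dimension is finite, as required. The argument is essentially PBW bookkeeping once the finite-dimensionality and abelianness of each $d_{-j} \otimes A$ are recorded, so there is no real obstacle; the only point requiring care is the reduction from general highest weight modules to Verma modules at the very beginning.
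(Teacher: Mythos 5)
Your argument is essentially the paper's own: the paper's proof simply invokes the last paragraph of the proof of Proposition~\ref{prop:hw-HC-finite-support}, namely that a highest weight module is $U(\cV_-)v$ applied to the generating vector and that the weight spaces of $U(\cV_-)$ are finite-dimensional when $A$ is, which is exactly what your reduction to $M(\varphi)\cong U(\cV_-)$ accomplishes. Your explicit PBW/partition count of $\dim U(\cV_-)_{-i}$ merely spells out the finite-dimensionality that the paper asserts without detail, so the two proofs coincide in substance.
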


\begin{proof}
  This follows from the reasoning in the last paragraph of the proof of Proposition~\ref{prop:hw-HC-finite-support}.\qed
\end{proof}

\begin{theorem} \label{thm:hw-HC-prod-single-points}
  Any irreducible highest weight quasifinite $\cV$-module is a tensor product of irreducible (generalized evaluation) highest weight quasifinite modules supported at single points.
\end{theorem}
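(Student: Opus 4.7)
The plan is to combine Proposition~\ref{prop:hw-HC-finite-support} with a primary decomposition of the annihilating ideal to reduce to a direct sum of local pieces, and then show the module factors as a tensor product of irreducible highest weight modules along that decomposition.

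Let $V = V(\varphi)$ be irreducible highest weight quasifinite. By Proposition~\ref{prop:hw-HC-finite-support}, there is an ideal $J \trianglelefteq A$ of finite codimension with $(\Vir \otimes J) V = 0$. Since $A/J$ is a finite-dimensional (hence Artinian) commutative $\C$-algebra, by shrinking $J$ if necessary I may assume $J = \fm_1^{n_1} \cdots \fm_\ell^{n_\ell} = \bigcap_{i=1}^\ell \fm_i^{n_i}$ for distinct maximal ideals $\fm_1, \dots, \fm_\ell \trianglelefteq A$ and $n_1, \dots, n_\ell \in \N_+$, so that the Chinese Remainder Theorem gives $A/J \cong \bigoplus_{i=1}^\ell A/\fm_i^{n_i}$. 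This yields a Lie algebra direct sum $\Vir \otimes (A/J) \cong \bigoplus_{i=1}^\ell L_i$, where $L_i := \Vir \otimes (A/\fm_i^{n_i})$, compatible with the $d_0$-weight grading, so that $\varphi$ decomposes as $\varphi = \varphi_1 + \cdots + \varphi_\ell$ with $\varphi_i \in \hom_\C((L_i)_0, \C)$. For each $i$, set $V_i := V(\varphi_i)$, the irreducible highest weight $L_i$-module; $V_i$ is automatically quasifinite by the Corollary following Proposition~\ref{prop:hw-HC-finite-support}, since $A/\fm_i^{n_i}$ is finite-dimensional. I would then show that the tensor product $V' := V_1 \otimes \cdots \otimes V_\ell$, viewed as a module over $\bigoplus_i L_i$, is an irreducible highest weight module with highest weight vector $v := v_{\varphi_1} \otimes \cdots \otimes v_{\varphi_\ell}$ on which $\cV_0$ acts via $\varphi$. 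Granting this, $V'$ is a quotient of $M(\varphi)$ and, being irreducible, must coincide with $V$; pulling each $V_i$ back through $\cV \twoheadrightarrow L_i$ identifies it with the single-point generalized evaluation module $\ev_{\fm_i^{n_i}} V_i$, which gives the desired factorization.

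The main obstacle is irreducibility of $V'$, which I would establish by computing the space of $L_+$-invariants, where $L_+ := \bigoplus_i (L_i)_+$ denotes the positive part. First, I would show that inside each irreducible $V_i$, the $(L_i)_+$-invariants equal $\C v_{\varphi_i}$: if $w \in V_i$ is a nonzero weight vector annihilated by $(L_i)_+$, then irreducibility together with the PBW factorization $U(L_i) = U((L_i)_-) U((L_i)_0) U((L_i)_+)$ gives $V_i = U(L_i) w = U((L_i)_-) U((L_i)_0) w$, so every weight of $V_i$ is at most that of $w$, forcing $w$ into the one-dimensional top weight space $\C v_{\varphi_i}$. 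Since $(L_i)_+$ acts only on the $i$-th tensor factor of $V'$, the full space of $L_+$-invariants in $V'$ is $\bigotimes_i \C v_{\varphi_i} = \C v$. A standard weight argument then finishes the proof: any nonzero submodule $W \subseteq V'$ contains a weight vector of maximal $d_0$-weight (such a maximum exists because the weights of $V'$ are bounded above by $\varphi(d_0)$), which must be annihilated by $L_+$ and is therefore a scalar multiple of $v$, so $W = V'$.
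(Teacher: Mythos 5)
Your argument is correct, and its skeleton (finite support via Proposition~\ref{prop:hw-HC-finite-support}, passing to $\fm_1^{n_1}\cdots\fm_\ell^{n_\ell}\subseteq J$, CRT splitting $\Vir\otimes(A/J)\cong\bigoplus_i L_i$, decomposing $\varphi=\sum_i\varphi_i$, and identifying $V(\varphi)$ with $\bigotimes_i V(\varphi_i)$) is the same as the paper's. Where you genuinely diverge is the one nontrivial step, the irreducibility of the tensor product: the paper disposes of it by citing abstract results on absolute irreducibility (each $V(\varphi_i)$ is irreducible of countable dimension over $\C$, hence absolutely irreducible, and a tensor product of absolutely irreducible modules over the summands $L_i$ is irreducible, via Bourbaki and [Li04]), whereas you prove it directly from the highest weight structure: the $(L_i)_+$-invariants of $V(\varphi_i)$ are exactly $\C v_{\varphi_i}$ because a singular weight vector generates and forces all weights below its own while $\varphi_i(d_0)$ is attained, so the $L_+$-invariants of the tensor product are $\C v$; then any nonzero submodule, being $d_0$-stable and hence a weight module with weights in $\varphi(d_0)-\N$, contains a maximal weight vector, which is a singular vector and hence a multiple of $v$, which generates. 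Your route is more elementary and self-contained (no Schur-type argument in countable dimension, no appeal to absolute irreducibility of tensor products), at the cost of using the specific weight-module structure — weights bounded above in a discrete set and a one-dimensional top weight space — which is exactly what is available here; the paper's citation-based argument is shorter and would apply even without exploiting the grading. Both are valid, and your final identification $V'\cong V(\varphi)$ via the surjection $M(\varphi)\twoheadrightarrow V'$ and uniqueness of the maximal proper submodule is the standard completion, matching the paper's conclusion.
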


\begin{proof}
  Suppose $V(\varphi)$ is an irreducible highest weight quasifinite module.  Then, by Proposition~\ref{prop:hw-HC-finite-support}, $J:= \Ann_A V$ has finite support.  Therefore $\rad J = \fm_1 \cdots \fm_r$ for some distinct maximal ideals $\fm_1, \dots, \fm_r \trianglelefteq A$.  Since every ideal in a Noetherian ring contains a power of its radical (see, for example, \cite[Prop.~7.14]{AM69}), there exists $N \in \N_+$ such that $\fm_1^N \cdots \fm_r^N \subseteq J$.  Then $\varphi(\Vir_0 \otimes \fm_1^N \cdots \fm_r^N)=0$, and so $\varphi$ corresponds to a unique element
  \[
    \bar \varphi \in \left(\Vir_0 \otimes A/\fm_1^N \cdots \fm_r^N \right)^* \cong \bigoplus_{i=1}^r (\Vir_0 \otimes A/\fm_i^N)^*.
  \]
  Let $(\bar \varphi_1, \dots, \bar \varphi_r) \in \bigoplus_{i=1}^r (\Vir_0 \otimes A/\fm_i^N)^*$ be the element corresponding to $\bar \varphi$ under the above isomorphism. For each $1 \le i \le r$, let $\varphi_i$ be the unique element of $(\cV_0)^*$ corresponding to $(0,\dots,0,\bar \varphi_i,0,\dots,0)$ (with the term $\bar \varphi_i$ occurring in the $i$-th position).  We thus have $\varphi = \sum_{i=1}^r \varphi_i$ and $V(\varphi_i)$ has support in the single point corresponding to the maximal ideal $\fm_i$.  Now, the tensor product $\bigotimes_{i=1}^r V(\varphi_i)$ is a weight module with a highest weight vector $v := v_{\varphi_1} \otimes \cdots \otimes v_{\varphi_r}$ and $u v = \varphi(u)v$ for all $u \in \cV_0$.  Since each $V(\varphi_i)$ is absolutely reducible (being irreducible of countable dimension), so is $\bigotimes_{i=1}^r V(\varphi_i)$ (see, for example, \cite[\S7.4, Theorem~2]{Bou58} or \cite[Lemma~2.7]{Li04}).  It follows that $\bigotimes_{i=1}^r V(\varphi_i) \cong V(\varphi)$.\qed
\end{proof}

\begin{corollary}
  If $V(\varphi_1), \dots, V(\varphi_r)$ are irreducible highest weight quasifinite modules with pairwise disjoint supports, then $\bigotimes_{i=1}^r V(\varphi_i) = V(\varphi_1 + \cdots + \varphi_r)$.
\end{corollary}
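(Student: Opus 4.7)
The plan is to reverse the argument at the end of the proof of Theorem~\ref{thm:hw-HC-prod-single-points}. Set $W := \bigotimes_{i=1}^r V(\varphi_i)$ with the diagonal $\cV$-action and $v := v_{\varphi_1} \otimes \cdots \otimes v_{\varphi_r}$. Since each $v_{\varphi_i}$ is killed by $\cV_+$ and satisfies $u v_{\varphi_i} = \varphi_i(u) v_{\varphi_i}$ for $u \in \cV_0$, the vector $v$ is a weight vector of weight $\varphi := \varphi_1 + \cdots + \varphi_r$ annihilated by $\cV_+$. Because each $V(\varphi_i) = \bigoplus_{n \in \N} V(\varphi_i)_{\varphi_i(d_0)-n}$ with top weight space $\C v_{\varphi_i}$, the weight $\varphi(d_0)$ can only be reached by taking the top factor in each slot, so $W_{\varphi(d_0)} = \C v$.

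The main step is to show $W$ is irreducible. By Proposition~\ref{prop:hw-HC-finite-support}, each $V(\varphi_i)$ is annihilated by an ideal $J_i \trianglelefteq A$ of finite codimension whose support equals $\Supp_A V(\varphi_i)$. The disjointness hypothesis makes $\rad J_1,\dots,\rad J_r$ pairwise coprime, hence the $J_i$ themselves are pairwise coprime, and the Chinese Remainder Theorem yields $A/(J_1 \cdots J_r) \cong \prod_{i=1}^r A/J_i$. Consequently, the $\cV$-action on $W$ factors through the surjection
\[
  \cV \twoheadrightarrow \cV/(\Vir \otimes (J_1 \cdots J_r)) \cong \bigoplus_{i=1}^r \Vir \otimes (A/J_i),
\]
with the $i$-th summand acting trivially on every tensor factor except the $i$-th; thus $W$ is the external tensor product of the modules $V(\varphi_i)$ over distinct direct summands. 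Each $V(\varphi_i)$ has countable dimension (being a quotient of $U(\cV)$) and is irreducible, hence absolutely irreducible by \cite[\S7.4, Theorem~2]{Bou58} or \cite[Lemma~2.7]{Li04}; the usual density-theorem argument for tensor products of absolutely irreducible modules over a direct sum of Lie algebras then yields that $W$ is irreducible as a module over that direct sum, and hence as a $\cV$-module.

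Combining the two steps, $W$ is an irreducible weight $\cV$-module containing a nonzero vector $v$ annihilated by $\cV_+$, so by the remark following the definition of highest weight module $W$ is a highest weight module with highest weight vector $v$ of weight $\varphi$. By the uniqueness of the irreducible highest weight module with given highest weight, $W \cong V(\varphi_1 + \cdots + \varphi_r)$, as claimed. The only subtle point is the passage from absolute irreducibility of the factors to irreducibility of the external tensor product over the direct sum, which is precisely the ingredient already invoked in the proof of Theorem~\ref{thm:hw-HC-prod-single-points}; the disjointness of supports is what makes this external tensor product structure available via the Chinese Remainder Theorem.
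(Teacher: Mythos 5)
Your proof is correct and follows essentially the same route as the paper, whose own proof of this corollary is simply a pointer back to the argument in Theorem~\ref{thm:hw-HC-prod-single-points}: the tensor product has a highest weight vector of weight $\varphi_1+\cdots+\varphi_r$, and irreducibility follows from absolute irreducibility of the factors (countable dimension) acting through distinct summands of $\Vir\otimes(A/J_1)\oplus\cdots\oplus\Vir\otimes(A/J_r)$. You merely make explicit the Chinese Remainder Theorem step and the quotient-of-Verma identification that the paper leaves implicit, so no further comment is needed.
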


\begin{proof}
  This follows from the proof of Theorem~\ref{thm:hw-HC-prod-single-points}.\qed
\end{proof}

Combining Proposition~\ref{prop:wt-spaces} and Theorems~\ref{thm:UBM-classification} and~\ref{thm:hw-HC-prod-single-points} yields the following.

\begin{theorem} \label{thm:main-theorem}
  Any irreducible quasifinite $\cV$-module is one of the following:
  \begin{enumerate}
    \item a single point evaluation module corresponding to a $\Vir$-module of the intermediate series (or tensor density module),
    \item a finite tensor product of single point generalized evaluation modules corresponding to irreducible highest weight modules, or
    \item a finite tensor product of single point generalized evaluation modules corresponding to irreducible lowest weight modules.
  \end{enumerate}
  In particular, they are all tensor products of single point generalized evaluation modules.
\end{theorem}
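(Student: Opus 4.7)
The plan is simply to assemble the theorem from the three classification results already established in the excerpt, so the proof is essentially a three-way case analysis followed by a reduction. Let $V$ be an irreducible quasifinite $\cV$-module. First I would invoke Proposition~\ref{prop:wt-spaces}, which provides the trichotomy: $V$ is either a highest weight module, a lowest weight module, or a uniformly bounded module. This trichotomy aligns precisely with the three cases in the statement, so it only remains to identify each class with the corresponding description.

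In the uniformly bounded case, Theorem~\ref{thm:UBM-classification} is directly applicable and says that $V \cong \ev_\fm V'$ for some maximal ideal $\fm \trianglelefteq A$ and some $\Vir$-module $V'$ of the intermediate series, which is exactly case~(a). In the highest weight case, Theorem~\ref{thm:hw-HC-prod-single-points} writes $V$ as a finite tensor product of single point generalized evaluation modules arising from irreducible highest weight quasifinite modules, which is case~(b).

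The lowest weight case is then handled by reduction to the highest weight case using the involution of $\cV$ induced by $d_n \mapsto -d_{-n}$, $c \mapsto -c$ (extended $A$-linearly), as noted in the remark following Lemma~\ref{lem:weight-string}. This involution preserves the map algebra structure and sends $\cV_\pm$ to $\cV_\mp$, hence exchanges the classes of highest and lowest weight modules; moreover, because it preserves the ideals $\Vir \otimes \fm^n$, it carries single point generalized evaluation modules to single point generalized evaluation modules. Applying Theorem~\ref{thm:hw-HC-prod-single-points} to the twist of $V$ under this involution therefore yields case~(c).

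Finally, for the ``in particular'' claim, I would note that a single point evaluation module $\ev_\fm V'$ as in case~(a) is by definition the same as the generalized evaluation module $\ev_{\fm^1} V'$ obtained by taking $n = 1$ in Definition~\ref{def:intermediate-series}'s preceding construction, and a ``finite tensor product'' with a single factor is again a tensor product, so all three cases fit under the common heading of tensor products of single point generalized evaluation modules. There is no genuine obstacle here, since the substantive work---the trichotomy of Section~\ref{sec:wt-spaces}, the uniformly bounded classification of Section~\ref{sec:UBM}, and the tensor decomposition of Section~\ref{sec:hw-modules}---has already been carried out; the only mild subtlety is ensuring that the involution argument respects the generalized evaluation structure, which is immediate from its $A$-linearity.
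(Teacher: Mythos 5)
Your proposal is correct and follows essentially the same route as the paper, whose proof is simply the one-line assembly of Proposition~\ref{prop:wt-spaces} with Theorems~\ref{thm:UBM-classification} and~\ref{thm:hw-HC-prod-single-points} (the lowest weight case handled implicitly via the involution remark, exactly as you spell out). The extra detail you give about the involution respecting the generalized evaluation structure and the identification $\ev_\fm = \ev_{\fm^1}$ is a harmless elaboration of what the paper leaves tacit.
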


%
\section{Reducibility of Verma modules} \label{sec:Verma-reducibility}
%

In this section, we give a sufficient condition for a Verma module for $\cV$ to be reducible.  This condition is also necessary if $A$ is an infinite-dimensional integral domain.  In the case that $A=\C[t,t^{-1}]$, the condition reduces to the one in \cite[Theorem~6.5]{GLZ08}.

Choose a basis $\cB_A$ of $A$ along with an order $\succ$ on $\cB_A$.  We then have an ordered basis of $\cV_-$ given by
\[
  \{d_{-n} \otimes f \ |\ n \in \N_+,\ f \in \cB_A\},\quad d_{-n_1} \otimes f_1 \succ d_{-n_2} \otimes f_2 \iff (n_1,f_1) \succ (n_2,f_2),
\]
where on the right-hand side we use the usual ordering on $\N_+$ and the lexicographic ordering on pairs.  This induces a PBW basis $\cB$ of $U(\cV_-)$.  We have a natural decomposition $\cB = \bigsqcup_{n=0}^\infty \cB^n$, where
\begin{multline*}
  \cB^n = \{(d_{-i_1} \otimes f_1) \cdots (d_{-i_n} \otimes f_n) \ |\ i_1,\dots,i_n \in \N_+,\ f_1,\dots,f_n \in \cB_A,\\
  (i_1,f_1) \succ \dots \succ (i_n,f_n) \}.
\end{multline*}
Note that, here and in what follows, we always write elements of $\cB$ with the factors in decreasing order.  We write $\height X = n$ for $X \in \cB^n$.  Define an ordering on $\cB$ by setting
\begin{align*}
  (d_{-i_1} \otimes f_1) \cdots (d_{-i_r} \otimes f_r) &\succ (d_{-j_1} \otimes g_1) \cdots (d_{-j_s} \otimes g_s) \\
  \iff (r,i_1,\dots,i_r,f_1,\dots,f_r) &\succ (s,j_1,\dots,j_s,g_1,\dots,g_s),
\end{align*}
where we again use the lexicographic ordering on tuples.

For $n,m \in \Z$, set $U^n_{-m} = U_n(\cV_-)_{-m}$, where we remind the reader that here $n$ refers to the natural filtration on the enveloping algebra and $-m$ denotes the weight (corresponding to the eigenvalue of the action of $d_0$).  Thus
\[
  U^{n_1}_{-m_1} U^{n_2}_{-m_2} \subseteq U^{n_1+n_2}_{-(m_1+m_2)} \quad \text{for all } n_1,n_2,m_1,m_2 \in \N.
\]
In particular,
\[
  (d_{-i_1} \otimes f_1) \cdots (d_{-i_n} \otimes f_n) \in U^n_{-(i_1 + \cdots + i_n)} \quad \text{for all } n,i_1,\dots,i_n \in \N_+,\ f_1,\dots,f_n \in A.
\]

Any element $X \in U(\cV_-)$ can be written as $\sum_{i=1}^n a_iX_i$ for $a_i \in \C$ and $X_1, \dots, X_n \in \cB$ with $X_1 \succ \cdots \succ X_n$.  We define
\[
  \height X = \height X_1,\quad \hm X = a_1X_1
\]
(here $\hm$ stands for \emph{highest term}).  By convention, we set $\height 0 = -1$ and $\hm 0 = 0$.  By definition, $\cB v_\varphi := \{b v_\varphi\ |\ v \in \cB\}$ is a basis for $M(\varphi)$.  For elements of this basis we define
\[
  \height (Xv_\varphi) = \height X,\quad \hm (Xv_\varphi) = (\hm X) v_\varphi.
\]

For a set of indeterminates $\mathcal{X}$, we have the natural grading on $\kk[\mathcal{X}]$, where the elements of $\mathcal{X}$ have degree one.  We thank D.~Daigle for the statement and proof of the following lemma, which will be used in the proof of Theorem~\ref{thm:Verma-reducible}.

\begin{lemma} \label{lem:DD}
  Suppose $R=\kk[\mathcal{X}]$ is a polynomial algebra over a field $\kk$, where $\mathcal{X}$ is an infinite set of indeterminates.  Write $R = \bigoplus_{d \in \N} R_d$, where $R_d$ is the space of homogeneous polynomials of degree $d$.  Let $M_1,\dots,M_p$ be pairwise distinct monomials in $R$, all of the same degree.  Then, for $p \in \N$, the subspace
  \[
    U = \left\{(L_1,\dots,L_p) \in R_1^p\ \left|\ \sum_{i=1}^p L_iM_i=0 \right. \right\}
  \]
  of $R_1^p$ is finite-dimensional over $\kk$.
\end{lemma}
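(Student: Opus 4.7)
The plan is to reduce to a finite-dimensional subspace of $R_1^p$ by showing that the linear forms $L_i$ appearing in any syzygy can only involve indeterminates that already occur in one of the $M_j$'s. Let $\mathcal{X}_0 \subseteq \mathcal{X}$ denote the (necessarily finite) set of indeterminates occurring in at least one of $M_1,\dots,M_p$. The key claim I would establish is that every $(L_1,\dots,L_p) \in U$ already lies in $\bigl(\mathrm{span}_\kk \mathcal{X}_0\bigr)^p$, which is a $\kk$-vector space of dimension at most $p\lvert\mathcal{X}_0\rvert$; the lemma then follows at once.

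To prove the claim, I would write each $L_i$ as a finite sum $L_i = \sum_{x \in \mathcal{X}} a_{i,x}\, x$ and expand the syzygy relation as
\[
0 = \sum_{i=1}^p L_i M_i = \sum_{i,x} a_{i,x}\,(x M_i).
\]
Equating coefficients at each monomial of degree $d+1$ (where $d$ is the common degree of the $M_i$) yields, for every monomial $N$ of degree $d+1$, the relation $\sum_{(i,x):\, xM_i = N} a_{i,x} = 0$. I would then fix $x_0 \in \mathcal{X} \setminus \mathcal{X}_0$ and $i_0 \in \{1,\dots,p\}$ and apply this relation to $N := x_0 M_{i_0}$.

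The crux of the argument is the combinatorial observation that the equation $N = x M_j$, with $x \in \mathcal{X}$ and $j \in \{1,\dots,p\}$, admits only the trivial solution $(j,x) = (i_0,x_0)$: if $x \ne x_0$, then $x_0$ must divide $M_j$, contradicting $x_0 \notin \mathcal{X}_0$; and if $x = x_0$, then $M_j = M_{i_0}$, forcing $j = i_0$ by the pairwise distinctness of the $M_i$. Granting this, the relation at $N$ collapses to $a_{i_0,x_0} = 0$; letting $i_0$ and $x_0$ vary then kills every coefficient of every $L_i$ attached to an indeterminate outside $\mathcal{X}_0$, establishing the claim. I do not anticipate any further obstacle; this uniqueness-of-factorization step is essentially the entire content of the proof, and it crucially uses both the distinctness of the $M_i$ and the fact that they share a common degree (so that $M_j$ is determined by $N$ once $x$ is).
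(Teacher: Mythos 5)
Your proof is correct. The overall plan coincides with the paper's: both arguments establish that any syzygy $(L_1,\dots,L_p)\in U$ has its components supported on a fixed finite set of indeterminates determined by the $M_i$, so that $U$ sits inside a finite-dimensional subspace of $R_1^p$. Where you differ is in how this containment is proved. The paper splits $\mathcal{X}=\mathcal{X}'\sqcup\mathcal{X}''$ with $\mathcal{X}'$ finite containing the variables of the $M_i$, writes $L_i=L_i'+L_i''$, and argues by contradiction using two specialization homomorphisms: one sending $\mathcal{X}''$ to $0$ (to isolate $\sum_i L_i''M_i=0$) and one sending $\mathcal{X}''$ to suitably chosen scalars, which produces a nontrivial linear relation $\sum_i\lambda_iM_i=0$ among the pairwise distinct monomials, a contradiction. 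You instead compare coefficients directly: for $x_0\notin\mathcal{X}_0$ the monomial $N=x_0M_{i_0}$ can be written as $xM_j$ only with $(j,x)=(i_0,x_0)$, by divisibility of $x_0$ and cancellation in the monomial monoid, so the coefficient relation at $N$ reads $a_{i_0,x_0}=0$. Your route is more elementary and explicit, avoids choosing specializations, and yields the slightly sharper containment $U\subseteq(\mathrm{span}_\kk\,\mathcal{X}_0)^p$ with $\mathcal{X}_0$ exactly the variables occurring in the $M_i$; the paper's specialization argument is shorter to write and defers all the combinatorics to the linear independence of distinct monomials. One minor remark: the common degree of the $M_i$ is not actually what makes $M_j$ recoverable from $N$ and $x$ (that is just cancellation of $x$); it only guarantees that all products $L_iM_i$ live in the single graded piece $R_{d+1}$, which is where you compare coefficients, so your parenthetical justification is slightly misplaced but harmless.
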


\begin{proof}
  Choose a finite subset $\mathcal{X}'$ of $\mathcal{X}$ such that $M_1,\dots,M_p \in \kk[\mathcal{X}']$, and let $\mathcal{X}'' = \mathcal{X} \setminus \mathcal{X}'$.  Define $R' = \kk[\mathcal{X}'] = \bigoplus_{d \in \N} R'_d$ and $R'' = \kk[\mathcal{X}''] = \bigoplus_{d \in \N} R_d''$, where $R'_d$ and $R''_d$ are the spaces of homogeneous polynomials of degree $d$.  Then $R_1 = R_1' \oplus R_1''$.

  To prove the lemma, it is enough to show that $U \subseteq (R_1')^p$.  Assume the contrary, and consider $(L_1,\dots,L_p) \in U$ such that $(L_1,\dots,L_p) \notin (R_1')^p$.  For each $i \in \{1,\dots,p\}$, write $L_i = L_i' + L_i''$ with $L_i' \in R_1'$ and $L_i'' \in R_1''$.  Then
  \begin{equation} \label{eq:DD1}
    \sum_{i=1}^p L_i' M_i + \sum_{i=1}^p L_i'' M_i = 0,
  \end{equation}
  and moreover $L_{i_0}'' \ne 0$ for some $i_0 \in \{1,\dots,p\}$.

  Let $\varphi : R \to R'$ be the $\kk$-algebra homomorphism that maps each element of $\mathcal{X}'$ to itself and each element of $\mathcal{X}''$ to zero.  Applying $\varphi$ to \eqref{eq:DD1} yields $\sum_{i=1}^p L_i' M_i = 0$, hence
  \begin{equation} \label{eq:DD2}
    \sum_{i=1}^p L_i'' M_i = 0.
  \end{equation}
  Now choose a $\kk$-algebra homomorphism $\psi : R \to R'$ that maps each element of $\mathcal{X}'$ to itself and each element of $\mathcal{X}''$ to an element of $\kk$, in such a way that $\psi(L_{i_0}'') \ne 0$.  Applying $\psi$ to \eqref{eq:DD2} yields $\sum_{i=1}^p \lambda_i M_i = 0$ for some $\lambda_1,\dots,\lambda_p$ not all zero.  Since $M_1,\dots,M_p$ are pairwise distinct, this is a contradiction.\qed
\end{proof}

\begin{theorem} \label{thm:Verma-reducible}
  The Verma module $M(\varphi)$, $\varphi \in \hom_\C(\cV_0,\C)$, is reducible if there exists a nontrivial ideal $J \trianglelefteq A$ such that $\varphi(d_0 \otimes J)=0$.  If $A$ is an infinite-dimensional integral domain, the reverse implication also holds.
\end{theorem}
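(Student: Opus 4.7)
For the sufficient direction, I would pick any $f \in J \setminus \{0\}$ and consider $w := (d_{-1}\otimes f)\tilde v_\varphi$. Because $M(\varphi)\cong U(\cV_-)$ as a $U(\cV_-)$-module, $w\neq 0$. For every $n \ge 1$ and $g \in A$, the central coefficient $\delta_{n,1}(n^3-n)/12$ vanishes at $n=1$, so
\[
  (d_n \otimes g)w = [d_n \otimes g,\, d_{-1}\otimes f]\,\tilde v_\varphi = -(n+1)(d_{n-1}\otimes gf)\,\tilde v_\varphi.
\]
This is zero when $n = 1$ since $gf \in J$ and $\varphi(d_0\otimes gf)=0$, and it is zero when $n \ge 2$ since $d_{n-1}\otimes gf \in \cV_+$ annihilates $\tilde v_\varphi$. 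Thus $\cV_+ w = 0$, so $U(\cV)w = U(\cV_-)U(\cV_0)w$ lies in the weight spaces $M(\varphi)_{\varphi(d_0)-k}$ with $k\ge 1$ and misses $\tilde v_\varphi$; hence it is a proper nonzero submodule and $M(\varphi)$ is reducible.

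For the converse, I would assume $A$ is an infinite-dimensional integral domain and $N(\varphi)\neq 0$, then choose a nonzero $w\in N(\varphi)$ of maximal weight $\varphi(d_0)-m$ (necessarily $m \ge 1$); maximality forces $\cV_+ w = 0$. Write $w = X\tilde v_\varphi$ with $X\in U(\cV_-)_{-m}$ and set $\alpha(f) := \varphi(d_0\otimes f)$. Decompose $X$ by the PBW height, and let $X^{(n_1)}$ be its top-height summand. The plan is to extract from $(d_1\otimes g)X\tilde v_\varphi = 0$, for $g$ ranging over $A$, a polynomial identity that constrains only $X^{(n_1)}$ and $\alpha$. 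Using $[d_1\otimes g,\, d_{-1}\otimes f] = -2(d_0\otimes gf)$ (no central term at $n=1$) and $[d_1\otimes g,\, d_{-i}\otimes f] = -(i+1)(d_{1-i}\otimes gf)$ for $i \ge 2$, the height of a PBW monomial drops under $d_1 \otimes g$ only through its $d_{-1}$-factors; lower-height components of $X$ produce outputs at strictly smaller heights and therefore cannot cancel the top-height contribution of $X^{(n_1)}$, which must then vanish on its own.

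Interpret the resulting top-height identity inside the commutative polynomial algebra $\Sym(d_{-1}\otimes A) \cong \C[x_a : a\in\cB_A]$; since $A$ is infinite-dimensional, this is a polynomial ring in infinitely many variables. The identity has the form $\sum_i L_{g,i}\, M_i = 0$, where the $M_i$ are the finitely many degree-$(n_1-1)$ monomials obtained by removing one variable from the support of $X^{(n_1)}$, and the $L_{g,i}$ are degree-one expressions whose coefficients are values $\alpha(g\cdot e_a)$. Lemma~\ref{lem:DD} then forces the family of linear forms $\{L_g\}_{g\in A}$ to span a finite-dimensional subspace of the degree-one part. Because $A$ is infinite-dimensional, this produces a nonzero subspace $W\subseteq A$ with $\alpha(g\,W) = 0$ for all $g \in A$. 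The integral-domain hypothesis guarantees that $J := AW$ is a nonzero ideal of $A$, and by construction $\varphi(d_0\otimes J) = \alpha(J) = 0$.

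The main obstacle is the bookkeeping in the second paragraph: one must carefully track the PBW normal-ordering corrections that arise when $d_0\otimes gf_k$ (produced by commuting $d_1 \otimes g$ past a $d_{-1}\otimes f_k$ factor of a top-height monomial) is moved past subsequent $d_{-1}\otimes f_\ell$ factors on its way to $\tilde v_\varphi$. These ``merge'' commutators contribute to the same top-height output and must be folded into the polynomial identity before Lemma~\ref{lem:DD} is invoked. The infinite-dimensional and integral-domain hypotheses on $A$ enter only in this final ideal-extraction step; the sufficient direction works for arbitrary $A$.
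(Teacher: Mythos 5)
Your sufficient direction is correct and is essentially the paper's argument, and your idea of taking a singular vector of maximal weight in $M(\varphi)$ itself---where $Y \mapsto Y\tilde v_\varphi$ is injective on $U(\cV_-)$---is a legitimate simplification that avoids the paper's induction on the weight depth. The gap is in the converse, and it is exactly at the point you dismiss as bookkeeping. Your height-separation claim---that lower-height components of $X$ contribute only at strictly smaller heights, so the top-height part $X^{(n_1)}$ ``must vanish on its own''---fails in the main case, namely when $X^{(n_1)}$ is a combination of monomials in $d_{-1}\otimes A$ alone (in weight $-m$ the maximal height $m$ is attained precisely by such monomials, so this is the generic situation). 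For such monomials every commutator $[d_1\otimes g,\,d_{-1}\otimes f]=-2\,d_0\otimes fg$ lowers the number of factors, so the contribution of $X^{(n_1)}$ lands at height $n_1-1$; but the height-$(n_1-1)$ monomials of $X$, those containing exactly one $d_{-2}$-factor, also land at height $n_1-1$ via $[d_1\otimes g,\,d_{-2}\otimes f]=-3\,d_{-1}\otimes fg$. These cross terms are precisely the summands $-3(d_{-1}\otimes f_{i,1}g)(d_{-1}\otimes f_{i,2})\cdots(d_{-1}\otimes f_{i,n-1})v_\varphi$ in the paper's identity \eqref{eq:sum-proof}; together with the internal merge terms they form the $\gamma_i$-part, and proving that this part vanishes is the entire content of the paper's Claim. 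Your last paragraph folds in only the merges internal to $X^{(n_1)}$, so this interference from the next-lower height is unaccounted for.

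The way you then invoke Lemma~\ref{lem:DD} and the hypotheses on $A$ is also not correct as stated. In the actual argument one first restricts $g$ to the finite-codimension kernel of the linear map $g\mapsto\bigl(\varphi(d_0\otimes f_{i,j}g)\bigr)_{i,j}$, so that the scalar terms drop out and the remaining identity $\sum_t (d_{-1}\otimes\beta_t g)M_t=0$ holds in the polynomial ring $\C[d_{-1}\otimes\cB_A]$; Lemma~\ref{lem:DD} makes the space of such tuples finite-dimensional, and the hypotheses that $A$ is an infinite-dimensional integral domain then force each $\beta_t=0$ (a nonzero principal ideal of such an $A$ cannot be finite-dimensional), i.e.\ all $\gamma_i=0$. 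The nontrivial ideal appears only afterwards: with the $\gamma$'s gone, comparing the coefficient of one fixed monomial yields $\varphi(d_0\otimes gF)=0$ for all $g\in A$ with a specific $F\ne 0$, so $J=AF$ works---and this final extraction needs no domain hypothesis at all, contrary to your closing remark. Your shortcut, applying Lemma~\ref{lem:DD} to forms whose coefficients are values $\alpha(ge_a)$ and concluding that there is a nonzero subspace $W$ with $\alpha(gW)=0$ for all $g\in A$, does not follow: finite-dimensionality of the span of your $L_g$ only gives vanishing of finitely many products $\alpha(gf)$ for $g$ in a finite-codimension subspace, i.e.\ $\varphi(d_0\otimes\cdot)$ kills a set of the form (subspace)$\cdot$(finite set), which is not an ideal and yields no contradiction. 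So the overall skeleton (apply $d_1\otimes g$, isolate the top-height output, use Lemma~\ref{lem:DD}) is right, but the two essential steps---including the single-$d_{-2}$ contributions among the terms to be killed, and the two-stage use of Lemma~\ref{lem:DD} followed by the coefficient extraction producing $J=AF$---are missing or misassembled.
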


\begin{proof}
  First suppose there exists a nontrivial ideal $J \trianglelefteq A$ such that $\varphi(d_0 \otimes J)=0$.  For $f \in J$ and $g \in A$, we have
  \[
    (d_1 \otimes g)(d_{-1} \otimes f) \tilde v_\varphi = [d_1 \otimes g, d_{-1} \otimes f] \tilde v_\varphi = -2(d_0 \otimes gf) \tilde v_\varphi = 0.
  \]
  Furthermore, for $m \ge 2$, we have
  \[
    (d_m \otimes g)(d_{-1} \otimes f) \tilde v_\varphi = [d_m \otimes g, d_{-1} \otimes f] \tilde v_\varphi = (-1-m)(d_{m-1} \otimes gf) \tilde v_\varphi = 0.
  \]
  This implies that $(d_{-1} \otimes f) \tilde v_\varphi$ is a highest weight vector and hence $M(\varphi)$ is reducible.

  Now suppose $A$ is an infinite-dimensional integral domain and there is no ideal $J \trianglelefteq A$ such that $\varphi(d_0 \otimes J)=0$.  To prove that $M(\varphi)$ is irreducible, it suffices to show that $M(\varphi)_{-n} = V(\varphi)_{-n}$ for all $n \in \N$.  We prove this by induction, the case $n=0$ being trivial.

  Suppose $M(\varphi)_{-1} \ne V(\varphi)_{-1}$.  Then there exists a nonzero $f \in A$ such that $(d_{-1} \otimes f) v_\varphi = 0$.  Then, for all $g \in A$, we have
  \[
    -2 \varphi(d_0 \otimes gf) v_\varphi = -2(d_0 \otimes gf) v_\varphi = [d_1 \otimes g, d_{-1} \otimes f] v_\varphi = 0.
  \]
  This implies that $\varphi(d_0 \otimes J) = 0$, where $J = Af$ is the ideal generated by $f$.  This contradiction implies that $M(\varphi)_{-1} = V(\varphi)_{-1}$.

  Now suppose $n>1$ and $M(\varphi)_{-k} = V(\varphi)_{-k}$ for all $0 \le k < n$.  It suffices to show that $Xv_\varphi \ne 0$ for all $X \in U(\cV_-)_{-n}$.  Towards a contradiction, suppose $Xv_\varphi = 0$ for some $X \in U(\cV_-)_{-n}$, and write $X = \sum_{i=1}^\ell a_i X_i$ for $a_1,\dots,a_\ell \in \C$ and $X_1, \dots, X_\ell \in \cB$ with $X_1 \succ \cdots \succ X_\ell$.  First suppose that $\height X < n$.  Then
  \[
    X_1 = (d_{-i_1} \otimes f_1) \cdots (d_{-i_r} \otimes f_r) (d_{-1} \otimes g_1) \cdots (d_{-1} \otimes g_s)
  \]
  for some $r > 0$ and $i_1 \ge 2$.  Then
  \begin{align*}
    \hm &((d_1 \otimes 1)Xv_\varphi) \\ &= \hm ([d_1 \otimes 1,X] v_\varphi) \\
    &= (-i_r-1)m(d_{-i_1+1} \otimes f_1) \cdots (d_{-i_{r-1}} \otimes f_{r-1}) (d_{-i_r} \otimes f_r) (d_{-1} \otimes g_1) \cdots (d_{-1} \otimes g_s) v_\varphi \\
    &\ne 0,
  \end{align*}
  where $m$ is the number of $(i_k,f_k)$, $1 \le k \le r$, equal to $(i_1,f_1)$, and the fact that the term is nonzero follows from the induction hypothesis.  Thus $X v_\varphi \ne 0$ as desired.

  It remains to consider the case $\height X =n$.  Then there exists $1 \le r \le s \le \ell$ such that
  \begin{gather*}
    \height X_i = n \text{ for } 1 \le i \le r,\quad \height X_i = n-1 \text{ for } r+1 \le i \le s,\\
    \height X_i \le n-2 \text{ for } s+1 \le r \le \ell.
  \end{gather*}
  For $1 \le i \le r$, we have
  \[
    X_i = (d_{-1} \otimes f_{i,1}) \cdots (d_{-1} \otimes f_{i,n})
  \]
  for some $f_{i,1},\dots,f_{i,n} \in \cB_A$.  Now, for $g \in A$, we have
  \begin{align*}
    (d_1 \otimes g) X_i v_\varphi &= [d_1 \otimes g, (d_{-1} \otimes f_{i,1}) \cdots (d_{-1} \otimes f_{i,n})] v_\varphi \\
    &= -2 \sum_{j=1}^n (d_{-1} \otimes f_{i,1}) \cdots (d_{-1} \otimes f_{i,j-1}) (d_0 \otimes f_{i,j}g) \\
    &\qquad \qquad \qquad \cdot (d_{-1} \otimes f_{i,j+1}) \cdots (d_{-1} \otimes f_{i,n}) v_\varphi \\
    &= -2 \sum_{j=1}^n (d_{-1} \otimes f_{i,1}) \cdots \widehat{(d_{-1} \otimes f_{i,j})} \cdots (d_{-1} \otimes f_{i,n}) (d_0 \otimes f_{i,j}g) v_\varphi \\
    + 2 \sum_{j=1}^n \sum_{k=j+1}^n & (d_{-1} \otimes f_{i,1}) \cdots \widehat{(d_{-1} \otimes f_{i,j})} \cdots (d_{-1} \otimes f_{i,k-1}) \\
    &\qquad \qquad \qquad \cdot (d_{-1} \otimes f_{i,j}f_{i,k}g) (d_{-1} \otimes f_{i,k+1}) \cdots (d_{-1} \otimes f_{i,n}) v_\varphi \\
    &= -2 \sum_{j=1}^n \varphi(d_0 \otimes f_{i,j}g) (d_{-1} \otimes f_{i,1}) \cdots \widehat{(d_{-1} \otimes f_{i,j})} \cdots (d_{-1} \otimes f_{i,n}) v_\varphi \\
    + 2 \sum_{j=1}^n \sum_{k=j+1}^n & (d_{-1} \otimes f_{i,j}f_{i,k}g) (d_{-1} \otimes f_{i,1}) \\
    &\qquad \qquad \qquad \cdots \widehat{(d_{-1} \otimes f_{i,j})} \cdots \widehat{(d_{-1} \otimes f_{i,k})} \cdots (d_{-1} \otimes f_{i,n}) v_\varphi,
  \end{align*}
  where the $\hat\ $ above a term means that term is omitted and we use the fact that $d_{-1} \otimes A$ is an abelian subalgebra of $\cV$.

  Now, for $r+1 \le i \le s$, we have
  \[
    X_i = (d_{-2} \otimes f_{i,1}) (d_{-1} \otimes f_{i,2}) \cdots (d_{-1} \otimes f_{i,n-1})
  \]
  for some $f_{i,1}, \dots, f_{i,n-1} \in \cB_A$.  Then, for $g \in A$, we have
  \begin{align*}
    (d_1 \otimes g)X_i v_\varphi &= [d_1 \otimes g,X_i] v_\varphi \\
    &\equiv -3(d_{-1} \otimes f_{i,1}g) (d_{-1} \otimes f_{i,2}) \cdots (d_{-1} \otimes f_{i,n-1}) \mod (U^{n-2}_{-n+1} v_\varphi).
  \end{align*}
  Combining the above computations and using the fact that $(d_1 \otimes g)X_iv_\varphi \in U^{n-2}_{-n+1} v_\varphi$ for $s+1 \le i \le \ell$ and $g \in A$, we have
  \begin{align*}
    0 &= (d_1 \otimes g)X v_\varphi = [d_1 \otimes g,X]v_\varphi \\
    & \equiv -2 \sum_{i=1}^r a_i \sum_{j=1}^n \varphi(d_0 \otimes f_{i,j}g) (d_{-1} \otimes f_{i,1}) \cdots \widehat{(d_{-1} \otimes f_{i,j})} \cdots (d_{-1} \otimes f_{i,n}) v_\varphi \\
    & \quad + 2 \sum_{i=1}^r a_i \sum_{j=1}^n \sum_{k=j+1}^n (d_{-1} \otimes f_{i,j}f_{i,k}g) (d_{-1} \otimes f_{i,1}) \cdots \widehat{(d_{-1} \otimes f_{i,j})} \\
    &\qquad \qquad \qquad \qquad \qquad \qquad \qquad \qquad \quad \cdots \widehat{(d_{-1} \otimes f_{i,k})} \cdots (d_{-1} \otimes f_{i,n}) v_\varphi \\
    & \quad - 3 \sum_{i=r+1}^s a_i (d_{-1} \otimes f_{i,1}g) (d_{-1} \otimes f_{i,2}) \cdots (d_{-1} \otimes f_{i,n-1})v_\varphi \mod (U^{n-2}_{-n+1} v_\varphi) \\
    & \equiv -2 \sum_{i=1}^r a_i \sum_{j=1}^n \varphi(d_0 \otimes f_{i,j}g) (d_{-1} \otimes f_{i,1}) \cdots \widehat{(d_{-1} \otimes f_{i,j})} \cdots (d_{-1} \otimes f_{i,n}) v_\varphi \\
    & \quad + \sum_{i=1}^m \gamma_i (d_{-1} \otimes q_{i,1} g)(d_{-1} \otimes q_{i,2}) \cdots (d_{-1} \otimes q_{i,n-1})v_\varphi \mod (U^{n-2}_{-n+1} v_\varphi),
  \end{align*}
  for some $\gamma_1,\dots,\gamma_m \in \C$ and pairwise distinct $(q_{i,1},\dots,q_{i,n-1}) \in (\cB_A)^{n-1}$, $i=1,\dots,m$.  By the induction hypothesis, we actually have equality:
  \begin{multline}
    0 = -2 \sum_{i=1}^r a_i \sum_{j=1}^n \varphi(d_0 \otimes f_{i,j}g) (d_{-1} \otimes f_{i,1}) \cdots \widehat{(d_{-1} \otimes f_{i,j})} \cdots (d_{-1} \otimes f_{i,n}) v_\varphi \label{eq:sum-proof} \\
    + \sum_{i=1}^m \gamma_i (d_{-1} \otimes q_{i,1} g)(d_{-1} \otimes q_{i,2}) \cdots (d_{-1} \otimes q_{i,n-1})v_\varphi.
  \end{multline}

  \medskip

  \textbf{Claim:} We have $\gamma_i=0$ for $i=1,\dots,m$.

  \medskip

  Assuming the claim, it follows from~\eqref{eq:sum-proof} that
  \[
    0 = -2 \sum_{i=1}^r a_i \sum_{j=1}^n \varphi(d_0 \otimes f_{i,j}g) (d_{-1} \otimes f_{i,1}) \cdots \widehat{(d_{-1} \otimes f_{i,j})} \cdots (d_{-1} \otimes f_{i,n}) v_\varphi.
  \]
  The coefficient of $(d_{-1} \otimes f_{1,1}) \cdots (d_{-1} \otimes f_{1,n-1}) v_\varphi$ in the above expression, which must therefore be equal to zero, is
  \[
    -2 \sum_{i \in I} k_i a_i \varphi(d_0 \otimes f_{i,n}g) = \varphi \left( d_0 \otimes g \left(-2 \sum_{i \in I} k_i a_i f_{i,n} \right) \right),
  \]
  where $I = \{i \ |\ 1 \le i \le r,\ (f_{i,1},\dots,f_{i,n-1}) = (f_{1,1}, \dots, f_{1,n-1})\}$, $k_1$ is the number of $q$ such that $f_{1,n} = f_{1,q}$, and $k_i=1$ for $i \ne 1$.  Note that $f_{i,n} \ne f_{j,n}$ for $i,j \in I$, $i \ne j$.  Thus $F := \sum_{i \in I} k_i a_i f_{i,n} \ne 0$.  It follows that $\varphi (d_0 \otimes J)=0$, where $J$ is the nontrivial ideal of $A$ generated by $F$.  This contradiction completes the proof of the theorem.  It thus remains to prove the above claim.

  \medskip

  \textit{Proof of Claim:}  (We thank D.~Daigle for the following proof.) Let $M_1,\dots,M_p$ be the distinct elements of the set $\{(d_{-1} \otimes q_{i,2}) \cdots (d_{-1} \otimes q_{i,n-1}) \ |\ 1 \le i \le m\}$.  Consider the partition $\{E_1,\dots,E_p\}$ of $\{1,\dots,m\}$ obtained by setting $E_t = \{i \ |\ (d_{-1} \otimes q_{i,2}) \cdots (d_{-1} \otimes q_{i,n-1}) = M_t\}$ for $t=1,\dots,p$.  Then
  \begin{align*}
    \sum_{i=1}^m \gamma_i (d_{-1} \otimes q_{i,1} g)&(d_{-1} \otimes q_{i,2}) \cdots (d_{-1} \otimes q_{i,n-1})v_\varphi \\
    &= \sum_{t=1}^p \sum_{i \in E_t} \gamma_i (d_{-1} \otimes q_{i,1} g)(d_{-1} \otimes q_{i,2}) \cdots (d_{-1} \otimes q_{i,n-1})v_\varphi \\
    &= \sum_{t=1}^p \sum_{i \in E_t} \gamma_i (d_{-1} \otimes q_{i,1}g) M_t v_\varphi \\
    &= \sum_{t=1}^p \left( \sum_{i \in E_t} \gamma_i(d_{-1} \otimes q_{i,1}g) \right) M_t v_\varphi \\
    &= \sum_{t=1}^p (d_{-1} \otimes \beta_t g) M_t v_\varphi,
  \end{align*}
  where $\beta_t = \sum_{i \in E_t} \gamma_i q_{i,1}$ in $A$.

  Now let $\Phi : A \to \C^{rn}$ be the linear map that sends $g \in A$ to the $r \times n$ matrix $(\varphi(d_0 \otimes f_{i,j}g))_{1 \le i \le r, 1 \le j \le n}$.  Then $W:= \ker \Phi$ is a subspace of $A$ with the property that $A/W$ is finite-dimensional. For each $g \in W$, we have $\sum_{i=1}^m \gamma_i(d_{-1} \otimes q_{i,1}g) (d_{-1} \otimes q_{i,2}) \cdots (d_{-1} \otimes q_{i,n-1}) v_\varphi= 0$ by \eqref{eq:sum-proof}, so
  \[
    \sum_{t=1}^p (d_{-1} \otimes \beta_tg)M_t v_\varphi = 0,\quad \text{for all } g \in W.
  \]
  Since $\sum_{t=1}^p (d_{-1} \otimes \beta_tg)M_t \in U^{n-1}_{-n+1}$, it follows from the inductive hypothesis that
  \[
    \sum_{t=1}^p (d_{-1} \otimes \beta_tg)M_t = 0,\quad \text{for all } g \in W.
  \]
  Now, if we view $U(d_{-1} \otimes A)$ as the polynomial algebra $\C[d_{-1} \otimes \cB_A]$, then each $M_t$ is a monomial of degree $n-2$ and each $d_{-1} \otimes \beta_t g$ is a polynomial of degree one.  Thus, by Lemma~\ref{lem:DD}, $\{d_{-1} \otimes \beta_1g,\dots, d_{-1} \otimes \beta_pg\ |\ g \in W\}$ is a finite-dimensional subspace of $(d_{-1} \otimes A)^p$.  Hence $\{(\beta_1g,\dots,\beta_pg)\ |\ g \in W\}$ is a finite-dimensional subspace of $A^p$.  Let $t \in \{1,\dots,p\}$.  Then $\{\beta_tg \ |\ g \in W\}$ is a finite-dimensional subspace of $A$.  Since $A/W$ is finite-dimensional, it follows that the principal ideal $\beta_tA$ of $A$ is finite-dimensional.  Since $A$ is an infinite-dimensional integral domain, this implies that $\beta_t=0$.  Since the $(q_{i,1},\dots,q_{i,n-1}) \in (\cB_A)^{n-1}$, $i=1,\dots,m$, are pairwise distinct, the map $i \mapsto q_{i,1}$, from $E_t$ to $\cB_A$, is injective.  Consequently, the family $(q_{i,1})_{i \in E_t}$ is linearly independent.  Since $\sum_{i \in E_t} \gamma_i q_{i,1}=0$, it follows that $\gamma_i=0$ for all $i \in E_t$.  Thus $\gamma_i = 0$ for all $i=1,\dots,m$ as claimed.\qed
\end{proof}

\begin{remark}
  The condition that $A$ is infinite-dimensional cannot be removed from the reverse implication in Theorem~\ref{thm:Verma-reducible}.  Indeed, consider the case $A = \C$, so that $\cV = \Vir$.  If Theorem~\ref{thm:Verma-reducible} were true more generally, it would assert that $M(\varphi)$ is reducible if and only $\varphi(d_0)=0$.  However, this is not true.  For example, when $\varphi(c)=1$, $M(\varphi)$ is reducible if and only if $\varphi(d_0) = m^2/4$ for some $m \in \Z$ (see \cite[Proposition~8.3]{KR87}).
\end{remark}


\bibliographystyle{alpha}
\bibliography{savage-virasoro-biblist}

\begin{thebibliography}{HWZ03}

\bibitem[AM69]{AM69}
M.~F. Atiyah and I.~G. Macdonald.
\newblock {\em Introduction to commutative algebra}.
\newblock Addison-Wesley Publishing Co., Reading, Mass.-London-Don Mills, Ont.,
  1969.

\bibitem[Bou58]{Bou58}
N.~Bourbaki.
\newblock {\em \'{E}l\'ements de math\'ematique. 23. {P}remi\`ere partie: {L}es
  structures fondamentales de l'analyse. {L}ivre {II}: {A}lg\`ebre. {C}hapitre
  8: {M}odules et anneaux semi-simples}.
\newblock Actualit\'es Sci. Ind. no. 1261. Hermann, Paris, 1958.

\bibitem[BZ04]{BZ04}
Yuly Billig and Kaiming Zhao.
\newblock Weight modules over exp-polynomial {L}ie algebras.
\newblock {\em J. Pure Appl. Algebra}, 191(1-2):23--42, 2004.

\bibitem[CFK10]{CFK10}
Vyjayanthi Chari, Ghislain Fourier, and Tanusree Khandai.
\newblock A categorical approach to {W}eyl modules.
\newblock {\em Transform. Groups}, 15(3):517--549, 2010.

\bibitem[CP88]{CP88}
Vyjayanthi Chari and Andrew Pressley.
\newblock Unitary representations of the {V}irasoro algebra and a conjecture of
  {K}ac.
\newblock {\em Compositio Math.}, 67(3):315--342, 1988.

\bibitem[GLZ]{GLZ06}
Xiangqian Guo, Rencai Lu, and Kaiming Zhao.
\newblock Classification of irreducible {H}arish-{C}handra modules over
  generalized {V}irasoro algebras.
\newblock \emph{Proc. Edinb. Math. Soc.}\ (to appear), arXiv:math/0607614.

\bibitem[GLZ11]{GLZ08}
Xiangqian Guo, Rencai Lu, and Kaiming Zhao.
\newblock Simple {H}arish-{C}handra modules, intermediate series modules, and
  {V}erma modules over the loop-{V}irasoro algebra.
\newblock {\em Forum Math.}, 23(5):1029--1052, 2011.

\bibitem[HWZ03]{HWZ03}
Jun Hu, Xian-Dong Wang, and Kai-Ming Zhao.
\newblock Verma modules over generalized {V}irasoro algebras {${\rm Vir}[G]$}.
\newblock {\em J. Pure Appl. Algebra}, 177(1):61--69, 2003.

\bibitem[KR87]{KR87}
V.~G. Kac and A.~K. Raina.
\newblock {\em Bombay lectures on highest weight representations of
  infinite-dimensional {L}ie algebras}, volume~2 of {\em Advanced Series in
  Mathematical Physics}.
\newblock World Scientific Publishing Co. Inc., Teaneck, NJ, 1987.

\bibitem[Li04]{Li04}
H.~Li.
\newblock On certain categories of modules for affine {L}ie algebras.
\newblock {\em Math. Z.}, 248(3):635--664, 2004.

\bibitem[LZ06]{LZ06}
Rencai Lu and Kaiming Zhao.
\newblock Classification of irreducible weight modules over higher rank
  {V}irasoro algebras.
\newblock {\em Adv. Math.}, 206(2):630--656, 2006.

\bibitem[LZ10]{LZ10}
Rencai L{\"u} and Kaiming Zhao.
\newblock Classification of irreducible weight modules over the twisted
  {H}eisenberg-{V}irasoro algebra.
\newblock {\em Commun. Contemp. Math.}, 12(2):183--205, 2010.

\bibitem[Mat92]{Mat92}
Olivier Mathieu.
\newblock Classification of {H}arish-{C}handra modules over the {V}irasoro
  {L}ie algebra.
\newblock {\em Invent. Math.}, 107(2):225--234, 1992.

\bibitem[Maz99]{Maz99}
Vladimir Mazorchuk.
\newblock Verma modules over generalized {W}itt algebras.
\newblock {\em Compositio Math.}, 115(1):21--35, 1999.

\bibitem[Maz00]{Maz00}
Volodymyr Mazorchuk.
\newblock Classification of simple {H}arish-{C}handra modules over {${\bf
  Q}$}-{V}irasoro algebra.
\newblock {\em Math. Nachr.}, 209:171--177, 2000.

\bibitem[MP91a]{MP91a}
Christiane Martin and Alain Piard.
\newblock Indecomposable modules over the {V}irasoro {L}ie algebra and a
  conjecture of {V}. {K}ac.
\newblock {\em Comm. Math. Phys.}, 137(1):109--132, 1991.

\bibitem[MP91b]{MP91b}
Christiane Martin and Alain Piard.
\newblock Nonbounded indecomposable admissible modules over the {V}irasoro
  algebra.
\newblock {\em Lett. Math. Phys.}, 23(4):319--324, 1991.

\bibitem[MP92]{MP92}
Christiane Martin and Alain Piard.
\newblock Classification of the indecomposable bounded admissible modules over
  the {V}irasoro {L}ie algebra with weightspaces of dimension not exceeding
  two.
\newblock {\em Comm. Math. Phys.}, 150(3):465--493, 1992.

\bibitem[NS]{NS11}
Erhard Neher and Alistair Savage.
\newblock Extensions and block decompositions for finite-dimensional
  representations of equivariant map alegbras.
\newblock arXiv:1103.4367.

\bibitem[NSS]{NSS09}
Erhard Neher, Alistair Savage, and Prasad Senesi.
\newblock Irreducible finite-dimensional representations of equivariant map
  algebras.
\newblock \emph{Trans. Amer. Math. Soc.} (to appear), arXiv:0906.5189.

\bibitem[Rao04]{Rao04}
S.~Eswara Rao.
\newblock On representations of toroidal {L}ie algebras.
\newblock In {\em Functional analysis {VIII}}, volume~47 of {\em Various Publ.
  Ser. (Aarhus)}, pages 146--167. Aarhus Univ., Aarhus, 2004.

\bibitem[Su01]{Su01}
Yucai Su.
\newblock Simple modules over the high rank {V}irasoro algebras.
\newblock {\em Comm. Algebra}, 29(5):2067--2080, 2001.

\bibitem[Su03]{Su03}
Yucai Su.
\newblock Classification of {H}arish-{C}handra modules over the higher rank
  {V}irasoro algebras.
\newblock {\em Comm. Math. Phys.}, 240(3):539--551, 2003.

\end{thebibliography}

\end{document}